\documentclass[11pt,leqno,oneside,letterpaper]{amsart} 
\usepackage{amsfonts}
\usepackage{amsmath}
\usepackage{amssymb}
\usepackage{latexsym}
\textwidth=6.25in \textheight=8.5in
\addtolength{\evensidemargin}{-.75in}
\addtolength{\oddsidemargin}{-.75in}

\parindent=0pt
\parskip=8pt
\setlength{\headsep}{.5in}
\setlength{\headheight}{0cm} \setlength{\topmargin}{-.5cm}
\usepackage[colorlinks,linkcolor=blue,citecolor=blue]{hyperref} 
\usepackage{marginnote}
\marginparwidth=65.0pt
 \marginparsep=11.0pt

\usepackage[all]{xy}

\newtheorem{Theorem}{Theorem}[section]
\newtheorem{cor}[Theorem]{Corollary}
\newtheorem{lemma}[Theorem]{Lemma}
\newtheorem{prop}[Theorem]{Proposition}

\newtheorem{claim}[Theorem]{Claim}

\newtheorem{conjecture}[Theorem]{Conjecture}

\def\qed{{\hspace{2mm}{\hfill \small $\Box$}}}

\def\C{{\mathbb C}}

\def\Z{{\mathbb Z}}
\def\Q{{\mathbb Q}}

\def\F{{\mathbb F}}

\begin{document}

\title{On the Tits alternative for $PD(3)$ groups}

\author{Michel Boileau}  
\thanks{Michel Boileau was partially supported by ANR projects 12-BS01-0003-01 and 12-BS01-0004-01.}
\address{Aix Marseille Univ, CNRS, Centrale Marseille, I2M, Marseille, France, 39, rue F. Joliot Curie, 13453 Marseille Cedex 13}
\email{michel.boileau@cmi.univ-mrs.fr }
 
\author[Steven Boyer]{Steven Boyer}
\thanks{Steven Boyer was partially supported by NSERC grant RGPIN 9446-2013}
\address{D\'epartement de Math\'ematiques, Universit\'e du Qu\'ebec \`a Montr\'eal, 201 avenue du Pr\'esident-Kennedy, Montr\'eal, QC H2X 3Y7.}
\email{boyer.steven@uqam.ca}
\urladdr{http://www.cirget.uqam.ca/boyer/boyer.html}

\maketitle

\begin{center}
\today 
\end{center} 

\begin{abstract}
We prove the Tits alternative for an almost coherent $PD(3)$ group which is not virtually properly locally cyclic. In particular, we show that an almost coherent $PD(3)$ group which cannot be
generated by less than four elements always contains a rank 2 free group. 
\end{abstract}

\section{Introduction}

Recent years have seen spectacular progress in our understanding of the algebraic properties of the fundamental groups of $3$-manifolds due mainly to Perelman's geometrisation theorem (cf. \cite{KL}, \cite{BBBMP}) and the work of Agol \cite{Ag} and Wise \cite{Wi1}, \cite{Wi2}. We know that hyperbolic $3$-manifold groups are subgroup separable, which means that their finitely generated subgroups are the intersection of subgroups of finite index. (Equivalently, finitely generated subgroups are closed in the profinite topology.) Moreover, except for graph manifolds which admit no Riemannian metrics of strictly negative sectional curvature, each closed manifold virtually fibers over the circle and thus its fundamental group has a finite index subgroup which is an extension of $\Z$ by a surface group, see \cite{Ag}, \cite{Li}, \cite{PW}, \cite{Wi1}. Necessary and sufficient conditions for a given group to be isomorphic to a closed 3-manifold group have been given in terms of group presentations (\cite{Go}, \cite{Tu}), however, no intrinsic algebraic characterisation of $3$-manifold groups is currently known.

Simple constructions show that if $n \geq 4$, each finitely presented group is the fundamental group of a smooth, closed, $n$-dimensional manifold. However, the constructions do not produce aspherical manifolds in general, so the (co)homology of the constructed manifolds does not necessarily coincide with that of the group. When $G$ is the fundamental group of a closed, aspherical $n$-manifold, its homology and cohomology with coefficients in a $\mathbb{Z}G$-module satisfy a form of Poincar\'e duality, and a group with this property is called an {\it $n$-dimensional Poincar\'e duality group} or, for short, a {\it $PD(n)$ group}. Equivalently, $G$ is a $PD(n)$ group if the following two conditions hold (see \cite{BiEc2}, \cite{Bi}, \cite[Chapter VIII]{Br}):
\vspace{-.2cm} 
\begin{itemize}
\item there is a projective $\mathbb{Z}[G]$-resolution of the trivial $\mathbb{Z}[G]$-module $\mathbb{Z}$ which is finitely generated in each dimension and 0 in all but finitely many dimensions (i.e.  $G$ is of type $FP$); 
\vspace{.2cm} \item $H^n(G; {\Z}G) \cong \Z$ and $H^i(G; {\Z}G) = \{0\}$ for all $i \not = n$. 
\end{itemize}
The first condition implies that $PD(n)$ groups are torsion-free. The latter implies that for $n \geq 2$, $PD(n)$ groups are $1$-ended and therefore indecomposable by Stallings theorem (cf. \cite[4.A.6.6]{Sta2}). 

Finite index subgroups of $PD(n)$ groups are $PD(n)$.

A $PD(n)$ group $G$ is said {\it orientable} if the action of $G$ on $H^n(G; {\Z}G)$ is trivial, and non-orientable otherwise. Each $PD(n)$ group contains an orientable $PD(n)$ group of index $1$ or $2$.

Wall asked in the 1960s whether this duality characterises the fundamental groups of closed aspherical $n$-dimensional manifold. But Davis produced examples of $PD(n)$-groups which are not finitely presentable, and so cannot be  the fundamental group of a closed, aspherical $n$-dimensional manifold \cite[Theorem C]{Dav}, for each $n \geq 4$. This leads to the following conjecture:

\begin{conjecture} {\rm (Wall)} \label{conj: Wall}
A finitely presented $PD(n)$ group is isomorphic to the fundamental group of a closed, aspherical $n$-dimensional manifold. 
\end{conjecture}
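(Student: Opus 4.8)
The conjecture is open in general, so the realistic plan is to reduce it to standard deep conjectures and to settle it in the ranges where those are available. The first step is valid for all $n \geq 3$: a finitely presented $PD(n)$ group $G$ has cohomological dimension $n \geq 3$, so by the Eilenberg--Ganea theorem (together with Wall's finiteness theory, using that $G$ is of type $FP$) it admits an $n$-dimensional aspherical complex $BG = K(G,1)$, which the $PD(n)$ hypothesis on $G$ promotes to an $n$-dimensional Poincar\'e duality space. Since $\widetilde{BG}$ is contractible, any closed manifold homotopy equivalent to $BG$ is automatically aspherical with fundamental group $G$, and conversely such a manifold realises $G$ exactly as the conjecture demands; so for $n \geq 3$ I would replace the conjecture by the equivalent assertion \emph{$BG$ is homotopy equivalent to a closed topological manifold}. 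For $n \geq 5$ this is controlled by surgery theory: Ranicki's total surgery obstruction $s(BG) \in \mathbb{S}_n(BG)$ vanishes precisely when $BG$ is homotopy equivalent to a closed manifold, and the Farrell--Jones conjecture for $G$ (in $K$- and $L$-theory) forces $\widetilde K_0(\Z G) = 0$ and $\mathbb{S}_n(BG) = 0$, since the assembly maps being isomorphisms kill the finiteness obstruction and the structure group. So for $n \geq 5$ the plan is simply to invoke Farrell--Jones; for $n = 4$ the same strategy applies but must confront the failure of TOP surgery for arbitrary fundamental groups in dimension four.

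For $n = 3$ surgery is unavailable, so instead I would run the geometrisation program at the level of the group and show directly that $G$ is a $3$-manifold group. Because $PD(3)$ groups are $1$-ended, $BG$ is aspherical and ``prime'', and I would split into cases according to the geometry of $G$. If $G$ is virtually polycyclic it is the fundamental group of a flat, $Nil$ or $Sol$ manifold and one is done. If $G$ has an infinite cyclic normal subgroup, Bowditch's theorem identifies $G$ with the fundamental group of a Seifert fibred space. If $G$ contains a surface (i.e.\ $PD(2)$) subgroup but has no normal $\Z$, then $G$ splits over such a subgroup, and iterating this splitting via the JSJ machinery for $PD(3)$ groups (Kropholler, Castel, Scott--Swarup) presents $G$ as a finite graph of groups whose vertex groups are Seifert-type or atoroidal, reducing everything to the atoroidal case. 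Finally, when $G$ is atoroidal, the conclusion would follow from two further statements: first, that an atoroidal $PD(3)$ group is word-hyperbolic --- whereupon, by Bestvina--Mess, its Gromov boundary is a homology $2$-manifold with the homology of $S^2$, hence $S^2$ by the classification of closed surfaces; and second, Cannon's conjecture, that a word-hyperbolic group with $2$-sphere boundary is virtually Kleinian and therefore a $3$-manifold group.

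The main obstacle is exactly what this reduction isolates: in every dimension the residual input is itself a famous open problem. For $n \geq 5$ it is the Farrell--Jones conjecture (and for $n = 4$, additionally the notorious limitations of four-dimensional surgery); for $n = 3$ it is the atoroidal case, and I expect its first ingredient --- manufacturing a genuinely word-hyperbolic geometry out of nothing but homological Poincar\'e duality, with no a priori metric, no nonpositively curved action, not even coherence --- to be the true crux, with Cannon's conjecture adding a further, analytically deep, layer. I would therefore not expect to prove the conjecture outright; the realistic deliverable is the reduction above, together with a proof in the many cases where the relevant conjectures are known --- Farrell--Jones for CAT(0), word-hyperbolic, lattice and solvable groups, and so on, when $n \geq 5$, and for $n = 3$ whenever $G$ already contains a $\Z^2$ or is already known to be word-hyperbolic.
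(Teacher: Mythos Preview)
The statement you were asked to address is not a theorem in the paper but Wall's \emph{conjecture}; the paper records it as open, notes the Eckmann--Linnell--M\"uller solution for $n=2$, and then moves on to its own business (Tits alternative for almost coherent $PD(3)$ groups). So there is no ``paper's own proof'' to compare against, and your write-up, to its credit, is explicit that no proof is being offered --- only a reduction scheme.

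As a strategic outline your reduction is the standard one and is broadly sound. Two technical points deserve care. First, in the step ``$BG$ is an $n$-dimensional Poincar\'e duality space'', Eilenberg--Ganea plus $FP$ gives you only a \emph{finitely dominated} $K(G,1)$; promoting this to a finite complex already requires the vanishing of Wall's finiteness obstruction in $\widetilde K_0(\Z G)$, which you correctly fold into the Farrell--Jones input, but the logical order should be made clear (you need the $K$-theory part of Farrell--Jones before you even have a finite Poincar\'e complex to feed into the $L$-theory machine). Second, in the $n=3$ discussion, ``contains a $PD(2)$ subgroup'' does not by itself force a splitting; one needs a $\Z\oplus\Z$ (or more generally a codimension-one $PD(2)$) subgroup to invoke the algebraic torus/JSJ theorems of Dunwoody--Swenson and Castel, so that sentence should be tightened. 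With those caveats, your summary of the state of the art --- Farrell--Jones for $n\ge 5$, the surgery bottleneck for $n=4$, and the atoroidal/Cannon endgame for $n=3$ --- matches the consensus picture and goes well beyond anything the paper attempts for this conjecture.
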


Non-trivial arguments of Eckmann, Linnell and M\"{u}ller (see \cite{Ec}) verified the conjecture for $PD(2)$ groups, and though the $PD(3)$ case remains completely open, progress has been made. For instance, Perelman's geometrisation theorem implies that a $PD(3)$ group is isomorphic to a $3$-manifold group if and only if it is virtually a $3$-manifold group. (See \cite[Theorem 5.1]{GMW}.) 

A group $G$ is called $FP_n$ if there is a projective $\mathbb{Z}[G]$-resolution of the trivial $\mathbb{Z}[G]$-module $\mathbb Z$ which is finitely generated in degrees $n$ or less. The condition $FP_1$ is equivalent to being finitely generated. The condition $FP_2$ is implied by being finitely presented and a group satisfying this condition is called {\it almost finitely presented}.  Since $PD(3)$ groups are $FP$, they are finitely generated and almost finitely presented. 
 
Hillman verified Conjecture \ref{conj: Wall} for $PD(3)$ groups of positive first Betti number which contain non-trivial, normal cyclic subgroups (\cite{Hil1}). Bowditch extended this to general $PD(3)$ groups containing non-trivial, normal cyclic subgroups (\cite{Bow}). This, together with results of Hillman (\cite{Hil1}, \cite{Hil2}, \cite{Hil3}) and Thomas (\cite{Tho1}), implies that the conjecture holds for $PD(3)$ groups containing a non-trivial, infinite index normal subgroup which is almost finitely presented.

A group is \emph{coherent} if each of its finitely generated subgroups is finitely presentable; it is \emph{ almost coherent} if
each of its finitely generated subgroups is almost finitely presented (i.e. $FP_2$). Scott's compact core theorem \cite{Sco1} shows that $3$-manifold groups are coherent. However, it is an open question whether a $PD(3)$ group is coherent or even almost coherent. Moreover, there are examples of closed hyperbolic $4$-manifolds whose fundamental groups are not coherent \cite{BoMe}, \cite{Pot}.

Dunwoody and Swenson proved a version of the torus theorem \cite{DS}: an orientable $PD(3)$ group is either isomorphic to the fundamental group of a Seifert $3$-manifold, or it splits over a 
$\mathbb Z \oplus \mathbb Z$ subgroup, or it is atoroidal (i.e.~does not contain any $\mathbb Z \oplus \mathbb Z$ subgroup). This was generalised in F.~Castel's thesis \cite{Cas}, where Scott-Swarup regular neighborhood theory (\cite{ScSw}) was used to show that an orientable $PD(3)$ group admits a canonical JSJ splitting along free abelian groups of rank $2$ with vertex groups isomorphic to either Seifert manifold groups or atoroidal $PD(3)$ pairs in the sense of \cite[Definition 1(2)]{Cas}. This splitting is analogous to the one induced on the fundamental group of closed orientable aspherical $3$-manifolds by their  geometric decomposition. For finitely presented $PD(3)$ groups it can be deduced from Dunwoody and Swenson  JSJ-decomposition theorem \cite{DS}. See \cite[Theorem 10.8]{Wa1} or  \cite[Theorem 4.2]{Wa2}.

These results show that there is a strong correlation between the algebraic properties of $PD(3)$ groups and those of $3$-manifold groups. The goal of this paper is to further enhance this correlation by studying the existence of non-abelian free groups.

A well-known theorem of Tits states that a finitely generated linear group contains a non-abelian free group or is virtually solvable (\cite{Ti}) and we say that a group $G$ satisfies the {\it Tits alternative} if each of its finitely generated subgroups verifies this dichotomy. Geometrisation shows that $3$-manifold groups are residually finite and satisfy the Tits alternative. In fact, more recent works by Agol and Wise showed that in most cases, $3$-manifold groups are linear.

To study the Tits alternative for  $PD(3)$ groups we cannot rely on the geometry, but use, rather, algebraic methods, namely the mod $p$ cohomology of groups and the theory of pro-$p$ groups through the works of Lubotzky-Mann \cite{LM} and Shalen-Wagreich \cite{SW}. For our purposes, the alternative and weaker approach of Segal \cite{Seg} to Lubotzky's linearity criterion \cite{Lu} is sufficient and allows us to avoid the use of Lazard's theory of $p$-adic analytic groups \cite{La}.  Homological methods have already been used to approach the Tits alternative for 3-manifolds groups (see \cite{KZ}, \cite{Me1}, \cite{Pa}, \cite{SW}) and also for $PD(3)$ groups (see \cite[section 3]{Hil3}, \cite[Chapter 2]{Hil4}).

We say that a finitely generated, torsion free group is {\it properly locally cyclic} if each of its proper, finitely generated subgroups is cyclic. As mentioned above, $3$-manifold groups are residually finite and this implies that they are not virtually properly locally cyclic (cf.~Lemma \ref{lem:wild}). Mess \cite{Me2} has shown that for each $n \geq 4$, there are closed aspherical $n$-manifolds with non-residually finite fundamental group. 

An infinite, finitely generated, properly locally cyclic  group cannot satisfy the Tits alternative, unless it is virtually $\mathbb{Z}$. They are torsion free analogues of {\it Tarski monsters}:  infinite simple groups all of whose proper subgroups are infinite cyclic (see \cite{Ol}). In Lemma \ref{lem:wild} we show that locally cyclic $PD(3)$ groups are simple. 

Here is the main result of this article. It sharpens \cite[Theorem 9]{Hil3} and \cite[Theorem 2.13]{Hil4}. 

\begin{Theorem}\label{th:main}
An almost coherent $PD(3)$ group satisfies the Tits alternative if and only if  it does not contain a finite index subgroup which is properly locally cyclic. 
\end{Theorem}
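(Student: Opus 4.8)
The proof has two directions. For the implication that is essentially formal, suppose $G$ contains a finite-index subgroup $H$ that is properly locally cyclic. Then $H$ is finitely generated, infinite, torsion free, and not virtually $\Z$ (it is a $PD(3)$ group); it contains no non-abelian free subgroup (every proper finitely generated subgroup of $H$ is cyclic, while $H$ itself is not free of rank $\geq 2$, such a group having proper finitely generated subgroups of larger rank); and it is not virtually solvable (a virtually solvable $PD(3)$ group is virtually polycyclic of Hirsch length $3$ and so contains a copy of $\Z\oplus\Z$, which a properly locally cyclic group cannot). Hence $H$, and therefore $G$, violates the Tits alternative.

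For the converse I argue contrapositively. Suppose $G$ is an almost coherent $PD(3)$ group that fails the Tits alternative, and fix a finitely generated $K\leq G$ that is neither virtually solvable nor contains a non-abelian free group; I must find a finite-index subgroup of $G$ that is properly locally cyclic. If $[G:K]=\infty$, then $K$ is $FP_2$ by almost coherence and $\mathrm{cd}(K)\leq 2$ by Strebel's theorem, and the structure theory of such subgroups of $PD(3)$ groups (Hillman, \cite{Cas}) forces $K$ to satisfy the Tits dichotomy: $K$ is free when $\mathrm{cd}(K)\leq 1$, a non-trivial free product, hence containing a rank-$2$ free group, when it has infinitely many ends, infinite cyclic when it has two ends, and otherwise it is virtually a $PD(2)$ group or is assembled by small splittings from such pieces. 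This contradicts the choice of $K$, so $[G:K]<\infty$; then $K$ is itself an almost coherent $PD(3)$ group with the same two defects, and since a finite-index subgroup of $K$ is one of $G$, we may replace $G$ by the orientation subgroup of $K$. Thus we may assume $G$ is orientable, almost coherent, $PD(3)$, not virtually solvable, and without a non-abelian free subgroup, and it suffices to produce a finite-index properly locally cyclic subgroup of this $G$. Now the torus theorem of Dunwoody--Swenson and Castel \cite{DS, Cas} applies: a Seifert $PD(3)$ group is either virtually solvable or has hyperbolic base $2$-orbifold and hence contains a rank-$2$ free group; and a $PD(3)$ group splitting over a $\Z\oplus\Z$ either splits ``largely'' and contains a rank-$2$ free group or is virtually $(\Z\oplus\Z)\rtimes\Z$, hence virtually solvable. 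All of these contradict the standing hypotheses, so $G$ must be atoroidal.

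It remains to treat the atoroidal case, which is where the pro-$p$ methods enter. Using the mod-$p$ Poincar\'e duality of $G$, a Golod--Shafarevich/deficiency estimate, and the Lubotzky--Mann theory of powerful pro-$p$ groups \cite{LM}, one shows that a finite-index subgroup of $G$ with large mod-$p$ homology for some prime $p$ would contain a rank-$2$ free group (this is the mechanism behind the four-generator statement in the abstract); as $G$ has no such subgroup, the ranks $\dim_{\F_p} H^1(N;\F_p)$ are at most $3$ for every finite-index $N\leq G$ and every $p$, so each pro-$p$ completion $\widehat{G}_p$ is a finitely generated pro-$p$ group of finite rank, hence $p$-adic analytic. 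Therefore, if a finite-index subgroup of $G$ were residually-$p$ it would embed in $\widehat{G}_p$ and be linear over $\Q_p$, which is impossible for a group satisfying neither side of the Tits dichotomy. Combining this with Segal's elementary substitute \cite{Seg} for Lubotzky's linearity criterion \cite{Lu} (which lets us avoid Lazard's theory), an analysis of the composition factors of the finite quotients of $G$, and Bowditch's theorem \cite{Bow} on $PD(3)$ groups having an infinite normal subgroup of infinite index, one obtains a finite-index subgroup $G'\leq G$ with no non-trivial finite quotient, in particular with no proper subgroup of finite index. On the other hand, the analysis of the reduction step, refined by atoroidality (no $\Z\oplus\Z$, hence no $PD(2)$ subgroup), by the absence of a rank-$2$ free subgroup (hence no free subgroup of rank $\geq 2$ and no genus-$\geq 2$ surface subgroup), and by the fact that an atoroidal $PD(3)$ group has no element conjugate to a proper power of itself (ruling out Baumslag--Solitar subgroups $BS(1,n)$ with $n\geq 2$), shows that every infinite-index finitely generated subgroup of $G'$ is infinite cyclic. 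Since $G'$ has no proper subgroup of finite index either, every proper finitely generated subgroup of $G'$ is cyclic; that is, $G'$ is properly locally cyclic, as required.

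I expect the atoroidal case to be the crux, and within it two steps to be the hardest. The first is producing an honest non-abelian free subgroup from large mod-$p$ homology of an almost coherent $PD(3)$ group: Golod--Shafarevich alone only yields an infinite --- indeed non-$p$-adic-analytic --- pro-$p$ completion, and upgrading this to an abstract rank-$2$ free subgroup is exactly where almost coherence, the quantitative $\F_p$-homology-growth results of Shalen--Wagreich \cite{SW}, and the Lubotzky--Mann structure theory must be combined. The second is the passage from ``all pro-$p$ completions of $G$ are $p$-adic analytic'' to ``some finite-index subgroup of $G$ has no non-trivial finite quotient'': the residually-$p$ case is handled by the Segal--Lubotzky argument, but one must still control finite quotients assembled from primes for which $G$ is not residually-$p$, and this forces the use of Bowditch's theorem together with a careful descent through finite-index subgroups. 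The auxiliary facts about atoroidal $PD(3)$ groups --- that they contain no distorted infinite cyclic subgroup, hence no element conjugate to a proper power, and the classification of their infinite-index $FP_2$ subgroups --- absorb the remaining technical work.
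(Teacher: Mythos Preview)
Your contrapositive architecture is sound and uses the right tools: the forward direction, the handling of an infinite-index counterexample via Strebel and Kapovich--Kleiner, the torus-theorem reduction to the atoroidal case, and the endgame are all correct. In particular, once you have a finite-index $G'$ with no proper finite-index subgroup, Kapovich--Kleiner together with atoroidality and the absence of a rank-$2$ free subgroup force every proper finitely generated subgroup of $G'$ to be free of rank at most~$1$, hence cyclic; the Baumslag--Solitar exclusion you add is superfluous.

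The genuine gap is in the step producing such a $G'$, i.e.\ in passing from ``$\beta_p(N)\le 3$ for every finite-index $N\le G$ and every prime $p$'' to ``$\widehat G$ is finite''. Your proposed route---$p$-adic linearity for residually-$p$ subgroups, then Segal plus Bowditch plus a ``descent through finite-index subgroups''---does not go through as stated: Bowditch's theorem (and its Hillman--Thomas extension) needs a non-trivial \emph{$FP_2$} normal subgroup of infinite index, but the kernels arising from the Segal step are not known to be finitely generated, so almost coherence does not make them $FP_2$, and there is nothing to feed to Bowditch. The paper closes this (Claim~\ref{claim:positive Betti}) with the ingredient you are missing, the Bieri--Strebel theorem: Lubotzky--Mann and Kov\'acs give a finite-index $G_1$ whose finite quotients are solvable of bounded Pr\"ufer rank; Segal then shows that if $\widehat{G_1}$ is infinite then $v\beta(G_1)>0$; and Bieri--Strebel converts an epimorphism $H\to\Z$ (with $H$ of type $FP$) into an HNN splitting with \emph{finitely generated} base. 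Almost coherence now makes that base $FP_2$, Kapovich--Kleiner (no surface subgroup under the atoroidal/no-free hypotheses) makes it free, and $\mathrm{cd}(H)\le 2$ contradicts $H$ being $PD(3)$. It is also worth noting that the paper's direct proof of Proposition~\ref{prop:tits} bypasses this whole difficulty in the finite-$\widehat H$ branch: the hypothesis ``not virtually properly locally cyclic'' immediately supplies a noncyclic finitely generated (hence infinite-index) subgroup of the minimal finite-index subgroup $H_0$, to which Kapovich--Kleiner applies directly, with no pro-$p$ input needed at that point.
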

\vspace{-.2cm} 

We show in Theorem \ref{th:surface} that an almost coherent $PD(3)$ group with infinite profinite completion either contains a surface group or contains a non-abelian free group. As such, it cannot be properly locally cyclic. Consequently, we obtain the following corollary which extends to $PD(3)$ groups the result of \cite{KZ}.  

\begin{cor} \label{cor:profinite}  
An almost coherent $PD(3)$ group with an infinite profinite completion satisfies the Tits alternative. 
\qed
\end{cor}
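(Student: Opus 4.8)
The plan is to derive the statement directly from Theorem \ref{th:main} together with Theorem \ref{th:surface}. Let $G$ be an almost coherent $PD(3)$ group with infinite profinite completion. By Theorem \ref{th:main} it suffices to show that $G$ has no finite index subgroup that is properly locally cyclic, so I would fix an arbitrary finite index subgroup $H \leq G$ and aim to exhibit inside $H$ a proper, finitely generated, non-cyclic subgroup.

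The first step is to check that $H$ again satisfies the hypotheses of Theorem \ref{th:surface}. Finite index subgroups of $PD(3)$ groups are $PD(3)$, as recalled in the introduction; every finitely generated subgroup of $H$ is a finitely generated subgroup of $G$ and hence is $FP_2$, so $H$ is almost coherent; and $\hat{H}$ surjects onto the closure of $H$ in $\hat{G}$, which has finite index in the infinite group $\hat{G}$ and is therefore infinite, so $\hat{H}$ is infinite as well. (Alternatively, if $\hat{H}$ were finite then $H$ would possess a smallest finite index subgroup, which would be contained in every finite index subgroup of $G$, forcing $\hat{G}$ to be finite.)

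The second step is to apply Theorem \ref{th:surface} to $H$: it yields a surface subgroup or a non-abelian free subgroup of $H$, so in either case $H$ contains a finitely generated, non-cyclic subgroup $K$ of cohomological dimension at most $2$ --- say a closed surface group, a copy of $\Z \oplus \Z$, or a rank $2$ free group. Since $H$ has cohomological dimension $3$ we have $K \neq H$, so $K$ is a proper, finitely generated, non-cyclic subgroup of $H$; thus $H$ is not properly locally cyclic. As $H$ was an arbitrary finite index subgroup of $G$, Theorem \ref{th:main} yields that $G$ satisfies the Tits alternative.

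I expect no serious obstacle here, since the mathematical content is carried entirely by Theorems \ref{th:main} and \ref{th:surface}. What remains are the two routine points above: that the $PD(3)$ property, almost coherence, and infiniteness of the profinite completion are all inherited by finite index subgroups, and that a surface or non-abelian free subgroup of a $PD(3)$ group is automatically proper (for cohomological dimension reasons), so that its presence already obstructs proper local cyclicity.
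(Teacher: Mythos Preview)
Your argument is correct and follows the same route the paper sketches just before the corollary: use Theorem~\ref{th:surface} to produce a surface or non-abelian free subgroup, observe that such a subgroup is proper for cohomological-dimension reasons, and conclude via Theorem~\ref{th:main}. The only difference is cosmetic: you apply Theorem~\ref{th:surface} to each finite index subgroup $H$ after checking its hypotheses are inherited, whereas the paper applies it once to $G$ (with the implicit understanding that a surface or rank $2$ free subgroup of $G$ meets every finite index $H$ in a subgroup of the same kind); either variant works.
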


The following corollary sharpens the results of \cite{Pa} and extends them to $PD(3)$ groups.

\begin{cor} \label{cor:mod p betti}  
An almost coherent $PD(3)$ group $G$ with $\dim_{\F_p} H_1(G; \F_p)\geq 2$ for some odd prime $p$ or with  $\dim_{\F_2} H_1(G; \F_2)\geq 3$ satisfies the Tits alternative. 
\end{cor}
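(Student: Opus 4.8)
The plan is to derive the corollary from Corollary \ref{cor:profinite}: it suffices to show that each of the two mod $p$ homological hypotheses forces the profinite completion $\hat G$ to be infinite, for then $G$ satisfies the Tits alternative by Corollary \ref{cor:profinite}. First I would reduce to the orientable case. For $p$ odd, replace $G$ by its orientable subgroup $G_0$ of index $1$ or $2$; since $[G:G_0]$ is prime to $p$, the transfer makes $H_1(G;\F_p)$ a direct summand of $H_1(G_0;\F_p)$, so $\dim_{\F_p}H_1(G_0;\F_p)\geq 2$, and $G$ satisfies the Tits alternative exactly when $G_0$ does. For $p=2$ nothing is needed, since Poincar\'e duality with $\F_2$ coefficients is untwisted. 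In all cases Poincar\'e duality gives $\dim_{\F_p}H^2(G;\F_p)=\dim_{\F_p}H_1(G;\F_p)=:d$, with $\dim_{\F_p}H^1(G;\F_p)=d$ and $H^3(G;\F_p)\cong\F_p$. It is then enough to prove that the pro-$p$ completion $G_{\hat p}$ is infinite, as it is an infinite quotient of $\hat G$.

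The core is a pro-$p$ argument in the spirit of \cite{LM} and \cite{SW}. The group $G_{\hat p}$ has a minimal generating set of size $d$, and a Golod--Shafarevich-type estimate guarantees infinitude once the number and weights of its defining relators are controlled. The bare count is far too weak --- it forces infinitude only for $d$ roughly $5$ or more --- so the decisive extra input is the cup-product structure on $H^\ast(G;\F_p)$ imposed by Poincar\'e duality: the quadratic relators of $G_{\hat p}$ are governed by $\cup\colon\Lambda^2 H^1(G;\F_p)\to H^2(G;\F_p)$ for $p$ odd, together with the Bockstein $\mathrm{Sq}^1\colon H^1(G;\F_2)\to H^2(G;\F_2)$ when $p=2$. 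For $p$ odd the triple cup product $H^1\otimes H^1\otimes H^1\to H^3(G;\F_p)\cong\F_p$ is alternating, hence factors through $\Lambda^3 H^1(G;\F_p)$; when $d=2$ that space is zero, so $\cup\equiv 0$, every relator of $G_{\hat p}$ has weight $\geq 3$, and the resulting slack makes the Hilbert series estimate (equivalently, mildness of $G_{\hat p}$) yield an infinite group. For $p=2$ the triple cup product is merely symmetric --- $\alpha\cup\alpha=\mathrm{Sq}^1\alpha$ can be nonzero --- so one dimension of quadratic relators may persist, which is exactly why the threshold rises to $d\geq 3$; there the refined estimate again gives $G_{\hat p}$ infinite. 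Corollary \ref{cor:profinite} then completes the proof.

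The main obstacle is sharpening the pro-$p$ estimate to reach the thresholds $d\geq 2$ (odd $p$) and $d\geq 3$ ($p=2$), well below the range accessible to Golod--Shafarevich for a general $FP_2$ group; this is where the Poincar\'e-duality constraints on the cup product (and, for $p=2$, Wu's formula and the orientation class) must be fed in. A secondary point is to check that the weight-$2$ relators of $G_{\hat p}$ really are measured by the cup product on $H^\ast(G;\F_p)$ and are not inflated by the residual kernel $\ker(G\to G_{\hat p})$, which one controls through the five-term exact sequence together with the resolution computing $H^\ast(G;\F_p)$.
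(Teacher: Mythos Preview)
Your strategy---reduce to showing $\widehat G$ infinite and then invoke Corollary~\ref{cor:profinite}---is exactly the paper's, but the paper's mechanism for proving $\widehat G$ infinite is entirely different and much shorter. The paper argues by contrapositive: if $\widehat G$ is finite there is a smallest finite-index normal subgroup $G_0\lhd G$, necessarily perfect, and $F_0=G/G_0$ acts freely on the $\Z$-cohomology $3$-sphere $K(G_0,1)$. Swan's theorem then forces $F_0$ to have periodic cohomology of period dividing $4$, so its Sylow $p$-subgroups are cyclic for odd $p$ and cyclic or generalised quaternion for $p=2$. Since $H_1(G;\F_p)$ is a quotient of $F_0^{\mathrm{ab}}$, this immediately gives $\beta_p(G)\le1$ for odd $p$ and $\beta_2(G)\le2$. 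No pro-$p$ analysis is needed.

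Your pro-$p$/Golod--Shafarevich route has a genuine gap precisely at the thresholds you are aiming for. Take the case $p$ odd, $d=\beta_p(G)=2$. Granting everything you say---the triple cup vanishes, hence $\cup\colon H^1\otimes H^1\to H^2$ vanishes, the inflation $H^2(G_{\hat p};\F_p)\hookrightarrow H^2(G;\F_p)$ gives $r\le 2$, and all relators of $G_{\hat p}$ have Zassenhaus weight $\ge 3$---the weighted Golod--Shafarevich polynomial is at worst $1-2t+2t^3$, and this is \emph{strictly positive} on $(0,1)$ (its minimum, at $t=1/\sqrt{3}$, is $1-4/(3\sqrt3)>0$). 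So the generalised Golod--Shafarevich criterion simply does not fire. Your invocation of ``mildness'' does not rescue this: Labute-type mildness criteria require a suitably \emph{non-degenerate} cup product pairing on the relator space, which is the opposite of what you have. In short, the Poincar\'e-duality constraints you feed in are real, but they do not push the estimate down to $d=2$. The same concern applies to your $p=2$, $d=3$ case: the assertion that ``one dimension of quadratic relators may persist'' is not justified, and even if it were, you would still need to verify that the resulting weighted inequality actually yields infinitude.

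The paper's argument sidesteps all of this by working with the full profinite completion rather than the pro-$p$ one: the periodicity of $F_0$ is a statement about \emph{all} primes simultaneously, and it is this global constraint (not a single-prime Golod--Shafarevich count) that produces the sharp bounds.
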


As already mentioned, an important consequence of geometrisation is that $3$-manifold groups are residually finite. In particular, any 3-manifold group contains a proper subgroup of finite index. One expects that the same property holds for $PD(3)$ groups, and since finite index subgroups of $PD(3)$ groups are $PD(3)$, Conjecture \ref{conj: psfi} would then imply that $PD(3)$ groups have infinite profinite completions. In particular, $PD(3)$ groups could not be virtually properly locally cyclic. 

\begin{conjecture} \label{conj: psfi}
A $PD(3)$ group always contains a proper subgroup of finite index.
\end{conjecture}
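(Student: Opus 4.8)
The plan is to reduce step by step to the hardest case. Since $G$ is finitely generated, having a proper subgroup of finite index is equivalent to having a nontrivial finite quotient, i.e.\ to $\hat G \neq 1$. If $G$ is non-orientable its orientation subgroup has index $2$ and we are done, so assume $G$ is orientable. If the abelianization $H_1(G;\Z)$ is nontrivial then $G$ surjects onto $\Z$ or onto a nontrivial finite cyclic group, and again we are done; so assume $G$ is perfect. Poincar\'e duality then gives $H_2(G;\Z)\cong H^1(G;\Z)=\mathrm{Hom}(G,\Z)=0$, so $H_*(G;\Z)\cong H_*(S^3;\Z)$: the remaining $PD(3)$ groups are exactly the superperfect ones, and the problem becomes to produce a nontrivial finite quotient of such a group.

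Next I would apply the canonical JSJ decomposition of an orientable $PD(3)$ group (Dunwoody--Swenson, Castel): either $G$ is a Seifert $3$-manifold group --- which is residually finite, so we are done --- or $G$ is built from a finite graph of groups with $\Z\oplus\Z$ edge groups and vertex groups that are Seifert $3$-manifold groups or atoroidal $PD(3)$ pairs, or $G$ is itself atoroidal. In the graph-of-groups case the strategy is the familiar one for $3$-manifolds: find compatible finite quotients of the vertex groups in which the images of the edge subgroups can be matched up, and amalgamate them to a finite quotient of $G$. This requires suitable subgroup and double-coset separability for $\Z\oplus\Z$ subgroups inside atoroidal $PD(3)$ pairs, which is itself open, and it reduces the conjecture to the case of a single atoroidal (superperfect) $PD(3)$ group.

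The atoroidal case is the crux, and I do not expect to dispose of it by soft arguments: an atoroidal $PD(3)$ group \emph{ought} to be the fundamental group of a closed hyperbolic $3$-manifold --- a $PD(3)$ analogue of Thurston's hyperbolization, related in spirit to Cannon's conjecture --- and once such a hyperbolic structure is available, the work of Agol and Wise furnishes a proper finite-index subgroup. So the honest plan is: reduce as above, and then either (a) establish enough of a $PD(3)$-hyperbolization theorem to invoke Agol and Wise in the atoroidal case, or (b) construct directly a nontrivial finite quotient of a superperfect atoroidal $PD(3)$ group by cohomological means. Both (a) and (b) are currently open; indeed this conjecture is essentially equivalent to the residual finiteness of $PD(3)$ groups, so the plan isolates the obstruction --- the atoroidal, \emph{not-known-to-be-geometric} case --- rather than removing it, much as that case was the barrier to the virtual Haken conjecture before Agol's theorem.
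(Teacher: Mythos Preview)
The statement you were asked to prove is labelled a \emph{Conjecture} in the paper and is left open there; the authors do not supply a proof, nor do they claim one. So there is nothing to compare your argument against. Your proposal is not a proof either, and to your credit you say so explicitly: after some easy reductions you arrive at the superperfect atoroidal case and acknowledge that both routes you suggest---a $PD(3)$ hyperbolisation theorem, or a direct cohomological construction of a finite quotient---are open. That is an accurate assessment of the current state of affairs, and it matches the spirit of the paper's discussion surrounding the conjecture.

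Two remarks on the details. First, your claim that the conjecture is ``essentially equivalent to the residual finiteness of $PD(3)$ groups'' overstates things. The conjecture, applied inductively (since finite index subgroups of $PD(3)$ groups are again $PD(3)$), yields an \emph{infinite} profinite completion, which is exactly what the paper observes; but an infinite profinite completion is strictly weaker than residual finiteness. Second, in the graph-of-groups reduction you could shortcut a little: if the underlying graph of the JSJ splitting has a cycle, $G$ surjects onto $\Z$ and you are already done by the abelianisation step, so only tree-shaped splittings survive the earlier reductions. This does not, however, remove the need for separability properties of the edge groups in the vertex $PD(3)$ pairs, which---as you note---is itself open.
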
 

Our remaining results determine sufficient conditions for the existence of a non-abelian free subgroup of a $PD(3)$ group. 

Scott's compact core theorem \cite{Sco1} shows that $3$-manifold groups are coherent. It would be preferable to avoid the coherence assumption in Theorem \ref{th:main}, but we need it to use an algebraic analogue of the core theorem for $PD(3)$ groups due to Kapovich and Kleiner \cite{KK}. A useful consequence of their result is that a one-ended $FP_2$ subgroup of infinite index in a  $PD(3)$ group contains a surface group. 

The {\it rank} of a group $G$, denoted $rk(G)$, is the minimal number of elements needed to generate it. The {\it Pr\"ufer-rank} of $G$, denoted $u(G)$, is the maximal rank among all finitely generated subgroups of $G$; it may be infinite. If a group $G$ has finite Pr\"ufer-rank $r$, every finitely generated subgroup of $G$ can be generated by $r$ elements. The classification of finite groups with cohomology of period $2$ and $4$ (see \cite[Chapter IV.6]{AM}) combines with Theorem \ref{th:main} to prove our next result. 

\begin{Theorem}\label{Theorem:rank} 
An almost coherent $PD(3)$ group $G$ contains a rank $2$ free group if and only if its Pr\"ufer-rank
is at least $4$.
\end{Theorem}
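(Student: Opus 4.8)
The plan is to prove the non-trivial direction by contraposition: if an almost coherent $PD(3)$ group $G$ contains no non-abelian free subgroup, then its Pr\"ufer-rank satisfies $u(G)\leq 3$. The converse is immediate, since a rank~$2$ free subgroup of $G$ already contains free subgroups of every finite rank, so that $u(G)=\infty\geq 4$. Throughout I use freely that $PD(3)$ groups are finitely generated, torsion-free and one-ended, and that finite-index subgroups of $PD(3)$ groups are $PD(3)$.

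First I would dispose of the case in which $G$ satisfies the Tits alternative. Being finitely generated with no non-abelian free subgroup, $G$ is then virtually solvable, hence, as a $PD(3)$ group (so of cohomological dimension $3$), virtually polycyclic of Hirsch length $3$; by Hillman's classification of virtually solvable $PD(3)$ groups \cite{Hil4} this means $G$ is the fundamental group of a closed flat, $\mathrm{Nil}$ or $\mathrm{Sol}$ $3$-manifold. Every finitely generated subgroup of such a $G$ is again virtually polycyclic of Hirsch length at most $3$, and a direct inspection of this finite list of groups shows that each is generated by at most three elements; hence $u(G)\leq 3$. (When the profinite completion of $G$ is infinite, Corollary~\ref{cor:profinite} reduces us to this case directly.)

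By Theorem~\ref{th:main} it remains to treat the case in which $G$ has a finite-index properly locally cyclic subgroup $H$. The normal core of $H$ in $G$ is a finite-index, hence $PD(3)$, subgroup of $H$; but a proper finitely generated subgroup of $H$ is cyclic and therefore not $PD(3)$, so the core equals $H$ and $H\trianglelefteq G$. By Lemma~\ref{lem:wild}, $H$ is simple; being non-abelian it has trivial centre, so $C_G(H)\cap H=1$, whence $C_G(H)$ embeds in the finite group $G/H$ and, $G$ being torsion-free, $C_G(H)=1$ and $G\hookrightarrow \mathrm{Aut}(H)$ with $\mathrm{Inn}(H)\cong H$. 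Moreover $H$ has no proper finite-index subgroup (such a subgroup would be finitely generated and proper, hence cyclic, hence of infinite index in the non-virtually-cyclic group $H$), so $H$ is orientable, $H^{\mathrm{ab}}=0$, and $H^{*}(H;\mathbb{Z})\cong H^{*}(S^{3};\mathbb{Z})$; and since any two elements of $H$ not lying in a common cyclic subgroup generate $H$, we have $rk(H)=2$. Now any finitely generated $K\leq G$ meets $H$ in a finite-index subgroup of $K$: if that meet is proper in $H$ it is cyclic, so $K$ is virtually cyclic and $rk(K)\leq 1$; otherwise $H\leq K$ and $K$ has finite index in $G$. Thus $u(G)$ is the maximum of $1$ and the ranks of the finitely many subgroups $K$ with $H\leq K\leq G$, each of which is a $PD(3)$ group with finite normal subgroup $H$ of rank $2$; it suffices to show $rk(K)\leq 3$ for every such $K$. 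Feeding $1\to H\to K\to K/H\to 1$ into the Lyndon--Hochschild--Serre spectral sequence with $\mathbb{Z}$ coefficients, the vanishing of $H^{q}(H;\mathbb{Z})$ for $q=1,2$ together with $\mathrm{cd}(K)=3$ leaves only the differential $d_{4}$, and forcing convergence to the cohomology of the $PD(3)$ group $K$ yields isomorphisms $H^{p}(K/H;\mathbb{Z}_{w})\cong H^{p+4}(K/H;\mathbb{Z})$ for all $p\geq 1$, where $\mathbb{Z}_{w}$ carries the orientation action of $K$. Passing to the orientation subgroup if necessary, $K/H$ has a subgroup of index at most $2$ with periodic cohomology of period dividing $4$, so by the classification of finite groups with periodic cohomology of periods $2$ and $4$ \cite[Ch.~IV.6]{AM} the group $K/H$ is generated by two elements and has a tightly restricted structure. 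Finally, using the simplicity of $H$ — so that the normal closure in $K$ of any non-trivial $a\in H$ is all of $H$ — I would show that for a suitable such $a$ and lifts $g_{1},g_{2}\in K$ of a generating pair of $K/H$ the subgroup $L=\langle a,g_{1},g_{2}\rangle$ is already all of $K$: otherwise $L\cap H$ is a finitely generated normal subgroup of $L$ properly contained in $H$, hence cyclic, which forces two conjugates of $a$ into a common cyclic subgroup of $H$; ruling this out for every choice of $a$, by exploiting that $H$ is properly locally cyclic and that $G$ has no non-abelian free subgroup, gives $rk(K)\leq 3$, hence $u(G)\leq 3$, contradicting $u(G)\geq 4$.

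The main obstacle is this last step: the crude bound $rk(K)\leq rk(H)+rk(K/H)=2+rk(K/H)$ overshoots by one, so one must combine the simplicity of $H$, the periodicity constraints on $K/H$ extracted from the spectral sequence, and the absence of free subgroups simultaneously in order to shave off a generator, and dealing cleanly with the twisted (non-orientable) periodicity and with the exceptional small quotients $K/H$ is where the argument is most delicate. A secondary technical point is the verification that every finitely generated subgroup of a virtually polycyclic $PD(3)$ group has Pr\"ufer-rank at most $3$, which underpins the Tits-alternative case.
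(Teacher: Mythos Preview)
Your overall architecture matches the paper's: reduce to (i) the virtually solvable case via Theorem~\ref{th:main} and the classification of solvable $PD(3)$ groups, and (ii) the case where $G$ has a properly locally cyclic normal finite-index subgroup $H=G_0$, then bound ranks using the periodicity of $G/H$ and the $2$-generation of $H$. Your derivation of the periodicity via the Lyndon--Hochschild--Serre spectral sequence is correct and amounts to the paper's appeal to Swan/Cartan--Eilenberg.

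The genuine gap is exactly where you flag it. Your scheme for the last step is: choose $a\in H$ and lifts $g_1,g_2$ of a generating pair of $K/H$, set $L=\langle a,g_1,g_2\rangle$, and argue $L\cap H$ cannot be cyclic. But ``ruling this out for every choice of $a$'' is not a proof: nothing you have written prevents, for a given $a$, all $\langle g_1,g_2\rangle$-conjugates of $a$ from lying in a single maximal cyclic subgroup of $H$, and the clause ``exploiting \dots\ that $G$ has no non-abelian free subgroup'' is not connected to any concrete mechanism. You also assume $rk(K/H)\le 2$; the paper never needs this sharper statement, and you do not verify it from the classification.

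The paper closes the gap by a different and more robust lifting argument (Lemma~\ref{lem:not wild}). Rather than fixing $a$ and hoping its conjugates spread out, it lifts a \emph{minimal} generating set $\{a_1,\dots,a_r\}$ of $\pi(H)\le F_0$ to $K=\langle\bar a_1,\dots,\bar a_r\rangle\le H$ and trichotomises on $K\cap G_0$. If $K\cap G_0=G_0$ then $K=H$ and $rk(H)\le r\le u(F_0)$; here only the Pr\"ufer bound $u(F_0)\le 3$ (from Kov\'acs plus the Sylow structure of periodic groups, or the explicit non-solvable case) is needed, not $rk(F_0)\le 2$. If $K\cap G_0$ is trivial or infinite cyclic, then $K$ is trivial or $K\cong\mathbb{Z}=\langle t\rangle$; in the cyclic case some $t^n\in G_0\setminus\{1\}$, and since $Z(G_0)=1$ (Bowditch rules out a nontrivial centre), one picks $x\in G_0$ not commuting with $t^n$ so that $\langle x,t^n\rangle=G_0$ by Lemma~\ref{lem:wild}(2), whence $H=\langle x,t\rangle$ and $rk(H)=2$. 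This is precisely the ``shaving off a generator'' you are looking for, but it comes from the power $t^n\in H$ of a lift, not from conjugates of a preassigned $a$. If you rewrite your last step along these lines, your argument goes through without needing $rk(K/H)\le 2$.

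A minor point: in the virtually solvable case, ``direct inspection of this finite list'' is loose, since the finitely generated subgroups are not drawn from a finite list; the paper handles this by passing to a torus-bundle finite cover and invoking Scott's compact core to see every finitely generated subgroup is a compact-$3$-manifold group of the relevant geometry, whence $u(G)\le 3$.
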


The following corollary of Theorem \ref{Theorem:rank} sharpens  \cite[Theorem 2.9]{SW} and extends it to $PD(3)$ groups.

\begin{cor}\label{cor:rank}   
An almost coherent $PD(3)$ group of rank $4$ or more contains a rank $2$ free group.
\qed
\end{cor}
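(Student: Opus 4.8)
The plan is to deduce Corollary~\ref{cor:rank} directly from Theorem~\ref{Theorem:rank}, with no further input. The key observation is simply that $G$ is itself a finitely generated subgroup of $G$ (recall a $PD(3)$ group is $FP$, hence finitely generated), so by the very definition of the Pr\"ufer-rank one has $u(G) \ge rk(G)$. Hence if $rk(G) \ge 4$, then $u(G) \ge 4$ (and this remains true even if $u(G)$ is infinite), and the ``if'' direction of Theorem~\ref{Theorem:rank} immediately yields a rank $2$ free subgroup of $G$. That is the whole argument.

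It is worth isolating where the real content sits, since the corollary itself contains essentially none. Theorem~\ref{Theorem:rank} packages two assertions: the ``only if'' direction, that a $PD(3)$ group of Pr\"ufer-rank at most $3$ contains no non-abelian free group --- which rests on the JSJ/structure theory for $PD(3)$ groups and, in the properly locally cyclic case, on the classification of finite groups with cohomology of period $2$ and $4$ as indicated just before the statement --- and the ``if'' direction, which is the substantive one and which passes through Theorem~\ref{th:main}, reducing to the fact that a $PD(3)$ group of Pr\"ufer-rank $\ge 4$ cannot contain a finite index subgroup that is properly locally cyclic. For the corollary only the ``if'' direction is invoked, and only the trivial monotonicity $u(G)\ge rk(G)$ is needed to check its hypothesis.

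I therefore do not expect any obstacle in this deduction: the single thing to verify is the inequality $u(G)\ge rk(G)$, together with the standing hypothesis that $G$ be an almost coherent $PD(3)$ group, which is assumed. The reason for recording the statement separately --- and the sense in which it sharpens \cite[Theorem 2.9]{SW} and extends it to $PD(3)$ groups --- is that the quantity available in practice is usually the rank of $G$ itself (coming, say, from a presentation or a generating set of geometric origin) rather than a bound on the rank of every finitely generated subgroup; Theorem~\ref{Theorem:rank} shows the weaker hypothesis already suffices. The hard work has all been done in Theorem~\ref{Theorem:rank}, hence ultimately in Theorem~\ref{th:main}.
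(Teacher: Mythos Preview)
Your deduction is exactly the intended one: the paper records the corollary with an immediate \qed\ because the single input is the trivial inequality $u(G)\ge rk(G)$ together with the ``if'' direction of Theorem~\ref{Theorem:rank}. There is nothing to add or correct.
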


Consideration of the $PD(3)$-group $\mathbb Z^3$ shows that the hypothesis that the rank be at least $4$ in Corollary \ref{cor:rank} is sharp. 

Corollary \ref{cor:profinite} shows that it is of interest to determine sufficient conditions on a $PD(3)$ group which guarantee that its profinite completion is infinite. Our next result gives such a condition via the group's $PSL(2,\mathbb C)$-character variety.

\begin{Theorem} \label{th:character} 
If a $PD(3)$ group $G$ has at least three distinct, strictly irreducible $PSL(2,\mathbb C)$-characters, then $\widehat{G}$ is infinite.  Moreover, if $G$ is almost coherent, it  contains a rank $2$ free group.
\end{Theorem}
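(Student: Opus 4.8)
The plan is to exploit the interplay between the $PSL(2,\mathbb C)$-character variety of $G$ and its profinite completion, and then to feed the conclusion into Corollary \ref{cor:profinite}. The first observation is that a single irreducible $PSL(2,\mathbb C)$-representation need not have infinite image, but finitely many of them jointly constrain $G$ a great deal. So the first step is to recall the structure of the $PSL(2,\mathbb C)$-character variety near an isolated point: if the variety were zero-dimensional at a strictly irreducible character $\chi_\rho$, then $\rho(G)$ is conjugate into a discrete subgroup of $PSL(2,\mathbb C)$, and by the standard dimension/deformation argument (Thurston's inequality $\dim_{\chi_\rho} X(G) \geq 3 - 3\chi(\text{"boundary"})$, which for a closed $PD(3)$ group gives a genuine lower bound unless $\rho$ is rigid) rigidity forces $\rho(G)$ to be either finite or a lattice in $PSL(2,\mathbb C)$. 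A lattice surjects onto $G$'s profinite completion-relevant quotient with infinite image, so in that case $\widehat G$ is already infinite and we are done. Hence we may assume that every strictly irreducible $\rho$ we are handed has $\rho(G)$ \emph{finite}.

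The second step is the combinatorial heart: three \emph{distinct} strictly irreducible characters with finite image. A finite irreducible subgroup of $PSL(2,\mathbb C)$ is, up to conjugacy, one of the classical list --- cyclic (excluded by irreducibility), dihedral, $A_4$, $S_4$, $A_5$. Each such group $Q$ has only finitely many irreducible $PSL(2,\mathbb C)$-characters (in fact at most three for the non-dihedral ones, and the dihedral groups give a one-parameter family only through \emph{reducible-in-the-limit} deformations, so their strictly irreducible ones are also finite in number for each fixed $Q$). The key point is that if $G$ had all its irreducible characters factoring through finite quotients, and there are at least three of them, then either two of them factor through the same finite simple-ish quotient $Q$ forcing $|Q|$ large enough that $Q$ is a nonabelian quotient of $G$ --- whence $\widehat G$ is infinite because $G$, being one-ended and $PD(3)$, cannot have all finite quotients bounded (a bounded family of finite quotients would make $\widehat G$ finite, contradicting that a $PD(3)$ group with finite $\widehat G$ would be... here one invokes that three genuinely different such characters cannot all come from a single bounded quotient). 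Concretely: distinct characters of the \emph{same} finite group $Q$ still give $Q$ as a quotient; if all three factor through quotients of bounded order, count the possible $Q$ and the number of irreducible characters each supports to derive that $G$ has a nonabelian finite quotient of unbounded order, or at least a nontrivial finite quotient, forcing $|\widehat G| = \infty$.

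The third step is the cleanup. Once $\widehat G$ is shown to be infinite, Corollary \ref{cor:profinite} immediately gives that if $G$ is almost coherent then it satisfies the Tits alternative; and since $G$ is a $PD(3)$ group it is not virtually cyclic and not solvable (a solvable $PD(3)$ group is virtually $\mathbb Z^3$, which has abelian --- hence at most reducible --- $PSL(2,\mathbb C)$-characters, contradicting the hypothesis of three strictly irreducible ones), so the Tits alternative yields a rank $2$ free subgroup. I expect the main obstacle to be the second step: making precise the statement ``three distinct strictly irreducible characters cannot all have finite image unless $\widehat G$ is infinite.'' The subtlety is that a single finite group $Q$ can carry several distinct irreducible $PSL(2,\mathbb C)$-characters, so ``three characters'' does not by itself produce ``$Q$ infinite''; one must instead argue that \emph{realizing} these as characters of $G$ forces $G$ to surject onto a nontrivial (indeed nonabelian, since the characters are irreducible) finite group, which already suffices to make $\widehat G$ infinite once one knows --- as is standard for $PD(3)$ groups via their being of type $FP$ and one-ended, together with the fact that a finitely generated group with exactly one nontrivial finite quotient up to the relevant equivalence is still infinite-profinite unless it is that finite group --- that $G$ itself cannot be finite. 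I would route this through the observation that a nonabelian finite quotient already implies $|\widehat G| \geq |Q| > 1$, and $\widehat G$ of a $PD(3)$ group, if nontrivial, is automatically infinite because finite-by-$\{$nontrivial finite$\}$ cannot exhaust a $PD(3)$ group --- more carefully, if $\widehat G$ were finite then $G$ would have a torsion-free finite-index subgroup equal to its intersection of finite-index subgroups, i.e. $G$ would be finite, contradicting $PD(3)$. Thus the existence of \emph{any} nontrivial finite quotient of $G$ makes $\widehat G$ infinite, and producing such a quotient from the irreducible characters --- via their finite images --- is the crux.
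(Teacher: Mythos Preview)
Your second step contains a fatal error. You assert that ``the existence of \emph{any} nontrivial finite quotient of $G$ makes $\widehat G$ infinite,'' justifying this by ``if $\widehat G$ were finite then $G$ would have a torsion-free finite-index subgroup equal to its intersection of finite-index subgroups, i.e.\ $G$ would be finite, contradicting $PD(3)$.'' This is wrong: if $\widehat G$ is finite, the intersection $G_0$ of all finite-index subgroups is indeed a finite-index subgroup of $G$, but $G_0$ is then itself an (infinite) $PD(3)$ group with no proper finite-index subgroups --- it is not forced to be trivial. Whether every $PD(3)$ group has a proper finite-index subgroup is exactly Conjecture~\ref{conj: psfi}, which is open. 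So a single nontrivial finite quotient gives you nothing, and your entire strategy of ``three characters $\Rightarrow$ some nontrivial finite quotient $\Rightarrow$ $\widehat G$ infinite'' collapses.

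What the paper does instead is assume $\widehat G$ is finite and extract structure from the resulting minimal finite-index subgroup $G_0$. Every $PSL(2,\C)$-representation with finite image factors through the \emph{single} finite group $F_0 = G/G_0$, and since $G_0$ is a perfect $PD(3)$ group, $F_0$ acts freely on an integral cohomology $3$-sphere, forcing $F_0$ to have periodic cohomology of period dividing $4$ (Claim~\ref{claim:periodic}). The classification of such groups then pins $F_0$ down to tetrahedral, octahedral, or icosahedral type, and these admit at most two strictly irreducible $PSL(2,\C)$-characters --- contradicting the hypothesis of three. You never identify or exploit this single universal quotient $F_0$, and that is the missing idea.

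Your third step is also too quick. It is not true that virtually solvable $PD(3)$ groups have only reducible $PSL(2,\C)$-characters: Sol manifolds admit strictly irreducible representations with image $T_{12}$ or $O_{24}$, and the paper must do a genuine case analysis (Seifert with Euclidean base, torus bundles, torus semibundles) to show that at most two such characters occur. Simply invoking ``solvable $\Rightarrow$ abelian image $\Rightarrow$ reducible'' is false.
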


\begin{cor}\label{cor:casson} 
Let $M$ be an integral homology $3$-sphere with Casson invariant $\vert \lambda(M) \vert \geq 2$. Then $\pi_1(M)$ contains a rank $2$ free group.
\qed
\end{cor}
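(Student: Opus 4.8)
The plan is to deduce Corollary \ref{cor:casson} from Theorem \ref{th:character}: I will produce three distinct strictly irreducible $PSL(2,\mathbb{C})$-characters of $G=\pi_1(M)$, the raw material being irreducible $SU(2)$-representations whose existence is forced by $|\lambda(M)|\geq 2$.

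First I would dispose of the possibility that $M$ is not aspherical. Since $M$ is an integral homology $3$-sphere, geometrisation shows that in this case $M$ is either $S^3$, or the Poincar\'e homology sphere $\Sigma(2,3,5)$, or a non-trivial connected sum of integral homology spheres. The first two are excluded by hypothesis, since $\lambda(S^3)=0$ and $|\lambda(\Sigma(2,3,5))|=1$. In the remaining case $\pi_1(M)$ is a free product of at least two non-trivial groups, each of order $120$ or infinite, and so already contains a rank $2$ free group. Hence I may assume that $M$ is aspherical, so that $G$ is an orientable $PD(3)$ group; it is coherent by Scott's compact core theorem \cite{Sco1}, hence almost coherent, and Theorem \ref{th:character} will apply once $G$ is shown to carry three distinct strictly irreducible $PSL(2,\mathbb{C})$-characters.

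Next I would read off $SU(2)$-representations from the hypothesis on the Casson invariant. Since $H_1(M;\Z)=0$, the only reducible $SU(2)$-representation of $G$ is the trivial one, and $\lambda(M)$ is, up to sign and the factor $\tfrac12$, the signed count of conjugacy classes of irreducible $SU(2)$-representations of $G$ (using equivariant perturbations along the positive-dimensional strata of the character variety; see the monograph of Akbulut and McCarthy, and Taubes' theorem identifying $\lambda(M)$ with half the Euler characteristic of instanton Floer homology). From $|\lambda(M)|\geq 2$ one then deduces that the $SU(2)$-character variety of $G$ is either positive-dimensional, hence infinite, or finite with at least $2|\lambda(M)|\geq 4$ conjugacy classes; in every case $G$ has at least three conjugacy classes of irreducible $SU(2)$-representations. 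I would finally pass to $PSL(2,\mathbb{C})$: as $G$ is perfect (again $H_1(M;\Z)=0$), any irreducible $SU(2)$-representation of $G$ has image not contained in the normaliser $N$ of a maximal torus $T$ of $SU(2)$ — otherwise it would be conjugate into $T$, since $N/T\cong\Z/2$ and $G$ is perfect, and hence reducible — so it projects to a $PSL(2,\mathbb{C})$-representation with strictly irreducible character; and because $\mathrm{Hom}(G,\Z/2)=0$, two such $SU(2)$-representations with the same $PSL(2,\mathbb{C})$-character differ by a twist by a homomorphism $G\to\{\pm 1\}$ and are therefore conjugate. Thus $G$ has at least three distinct strictly irreducible $PSL(2,\mathbb{C})$-characters, and Theorem \ref{th:character} delivers a rank $2$ free subgroup.

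The step I expect to be the main obstacle is the passage from $|\lambda(M)|\geq 2$ to the existence of three conjugacy classes of irreducible $SU(2)$-representations: the $SU(2)$-character variety of $G$ need be neither smooth nor zero-dimensional, so one must argue carefully from the precise definition of the Casson invariant via equivariant perturbations (equivalently, from the rank of instanton Floer homology) to see that a singular or degenerate isolated representation cannot absorb more than a bounded amount of $\lambda(M)$. By contrast, the geometric dichotomy in the non-aspherical case and the representation-theoretic bookkeeping over $PSL(2,\mathbb{C})$ are routine once $H_1(M;\Z)=0$ is exploited.
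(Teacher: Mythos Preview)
Your overall strategy---produce enough irreducible $SU(2)$-characters, promote them to strictly irreducible $PSL(2,\mathbb C)$-characters using $H_1(M;\Z)=0$, then invoke Theorem~\ref{th:character}---is exactly the paper's. The representation-theoretic bookkeeping (irreducible implies strictly irreducible for perfect $G$; the passage $SU(2)\to PSL(2,\mathbb C)$ is injective on characters since $\mathrm{Hom}(G,\Z/2)=0$) is correct and matches what the paper uses implicitly. Two points, however, distinguish your argument from the paper's.

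First, your reduction to the aspherical case invokes geometrisation, whereas the whole point of the paper's proof (as stated just after the corollary) is to avoid it. The paper instead argues: if $\pi_1(M)$ is a free product it contains a free group; if $M$ is irreducible with finite $\pi_1$, the universal cover is a cohomology $3$-sphere, so Swan's theorem gives periodic cohomology of period~$4$, whence $\pi_1(M)\cong SL(2,\mathbb F_5)$ (being perfect) and $|\lambda(M)|\le 1$ via \cite[Lemma 9.1]{Rez}; otherwise the sphere theorem and Hurewicz give asphericity. Your reduction can be rewritten this way without difficulty, but as it stands it defeats the paper's aim.

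Second---and this is the substantive gap you yourself flag---the step ``$|\lambda(M)|\ge 2$ implies at least three irreducible $SU(2)$-characters'' is not established. When the character variety is zero-dimensional but degenerate, an isolated conjugacy class can carry local intersection multiplicity larger than~$1$, so the inequality (number of classes) $\ge 2|\lambda(M)|$ need not hold; neither the perturbation definition nor the Floer-theoretic identity $2\lambda=\chi(HF)$ bounds the contribution of a single degenerate class. The paper does \emph{not} try to bound this contribution. Instead it first peels off the case where some $SU(2)$-representation has infinite image (then the linear Tits alternative \cite{Ti} applied to the image gives a free subgroup directly), and in the remaining case, where every nontrivial $SU(2)$-representation has image $SL(2,\mathbb F_5)$, invokes Reznikov \cite[Lemma 9.1]{Rez}: either there are at least $2|\lambda(M)|$ conjugacy classes, and Theorem~\ref{th:character} finishes as you propose, \emph{or} $M$ has positive virtual first Betti number. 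In that last branch the paper abandons the character-variety route entirely: a finite cover contains a nonseparating $\pi_1$-injective surface, so $\pi_1(M)$ is not virtually properly locally cyclic, and Theorem~\ref{th:main} yields the rank~$2$ free subgroup. Your proposal is missing precisely this fallback branch; without it, the ``main obstacle'' you identify remains an actual gap.
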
 

This corollary is known to follow from the geometrisation of 3-manifolds. In contrast, our proof is algebraic.

Here is the plan of the paper. In Section \ref{sec: ghat infinite} we study $PD(3)$ groups with infinite profinite completions and prove that when such a group is almost coherent,  
it either contains a surface group or is virtually $k$-free for each $k \geq 1$ (Theorem \ref{th:surface}). Section \ref{sec: tits} contains the proof of Theorem \ref{th:main} and Corollary \ref{cor:mod p betti} while \S \ref{sec: r2free} characterises almost coherent PD(3) groups which contain non-abelian free groups. See Theorem \ref{Theorem:rank}. Finally, we prove Theorem \ref{th:character} and Corollary \ref{cor:casson} in \S \ref{sec: psl2}.

{\bf Acknowledgements}. The authors would like to thank Ralph Strebel for his 
remarks clarifying the use of Theorem \cite[Theorem A]{BiSt} in the proof of Claim \ref{claim:positive Betti} and Jonathan Hillman for his remarks on an earlier version of the paper. The authors are also grateful to the referee for pointing out gaps in some of our arguments and for comments resulting in an improved exposition.

\section{PD(3) groups with infinite profinite completion} \label{sec: ghat infinite} 
Let $k \geq 1$ be an integer. A group is {\it $k$-free} if any subgroup generated by $k$ elements is free. For an almost coherent $PD(3)$ group with infinite profinite completion, we have the following  result  related to the Tits alternative.

\begin{Theorem}\label{th:surface}
Let $G$ be an almost coherent $PD(3)$ group. If $\widehat{G}$ is infinite, then either: 

$(1)$ $G$ contains a surface group, or 

$(2)$ $G$ is virtually $k$-free for all $k \geq 1$. 
 
\end{Theorem}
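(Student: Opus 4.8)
The plan is to exploit the fact that $\widehat{G}$ infinite means $G$ surjects onto an infinite pro-$p$ group for some prime $p$, and to run a dichotomy on the $\F_p$-cohomological data this produces. First I would pass to a finite-index subgroup and reduce to the case where $\dim_{\F_p} H_1(G;\F_p)$ is as large as convenient; the point is that an infinite profinite completion forces, after passing to a suitable finite-index subgroup $G_0$, either a surjection of $G_0$ onto $\Z$ (the positive virtual first Betti number case) or, via the Golod--Shafarevich-type inequalities available for $PD(3)$ groups through the Lubotzky--Mann \cite{LM} and Shalen--Wagreich \cite{SW} machinery (in the weaker form of Segal \cite{Seg}), control on the mod-$p$ lower central series of $G_0$. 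In the first case $G_0$ is an ascending HNN extension or splits over an $FP_2$ subgroup; Bieri--Strebel theory \cite{BiSt} (as used for Claim \ref{claim:positive Betti}) then produces a finitely generated normal subgroup $N$ which is either of type $FP_2$ or infinitely generated.

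The core of the argument is then a subgroup-detection step. If along the way we locate a one-ended, $FP_2$, infinite-index subgroup of $G$, then the Kapovich--Kleiner algebraic core theorem \cite{KK} (quoted in the excerpt) immediately gives a surface subgroup, putting us in case $(1)$. So the remaining task is to show that if $G$ has \emph{no} surface subgroup, then in fact every finitely generated subgroup of $G$ is free, i.e. $G$ is virtually $k$-free for all $k$. For this I would argue that any finitely generated subgroup $H \le G$ of infinite index is either free or, being $FP_2$ by almost coherence, has a one-ended factor in its Stallings--Dunwoody decomposition; that one-ended piece is an infinite-index $FP_2$ subgroup of the $PD(3)$ group $G$, hence contains a surface group by \cite{KK}, contradicting our standing hypothesis. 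Thus infinite-index finitely generated subgroups are free. A finite-index subgroup is itself $PD(3)$, so it is not free; one then checks that the "no surface subgroup" hypothesis combined with the virtual positive Betti number / pro-$p$ dichotomy forces the relevant finitely generated subgroups we care about to be of infinite index, and after passing to a suitable finite-index subgroup $G_0$ every $k$-generated subgroup is infinite index, hence free. This gives virtual $k$-freeness for all $k$.

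The main obstacle I anticipate is the bookkeeping needed to guarantee, in the absence of a surface subgroup, that a $k$-generated subgroup actually has infinite index after passing to a controlled finite-index subgroup — equivalently, ruling out that $G_0$ is finitely generated of bounded Pr\"ufer-rank with all $k$-generated subgroups of finite index, which is exactly the properly-locally-cyclic-type pathology that Theorem \ref{th:main} is built to exclude. Here the hypothesis $\widehat G$ infinite is essential: it lets us pass to arbitrarily deep finite-index subgroups, so that for any fixed $k$ we can find $G_0 \le G$ of finite index in which no set of $k$ elements generates a finite-index subgroup (otherwise $G$ would be virtually properly locally cyclic with infinite profinite completion, which one shows is impossible by combining residual-finiteness-style arguments with Lemma \ref{lem:wild} on simplicity of locally cyclic $PD(3)$ groups). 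Once infinite index is secured, $FP_2$-ness from almost coherence plus the Dunwoody accessibility decomposition plus \cite{KK} does the rest. The delicate points are therefore (i) the precise extraction of an infinite-index $FP_2$ subgroup from the profinite hypothesis, and (ii) making the passage to finite-index subgroups uniform enough that the conclusion "virtually $k$-free for all $k$" — rather than merely "$k$-free after a $k$-dependent amount of work" — is what comes out; I would handle (ii) by noting that the surface-subgroup alternative is a fixed dichotomy applied to $G$ itself, so once $(1)$ fails the freeness of infinite-index finitely generated subgroups holds in every finite-index subgroup simultaneously.
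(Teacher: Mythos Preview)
Your proposal has the right skeleton and correctly identifies the key tools (Kapovich--Kleiner, Bieri--Strebel, Shalen--Wagreich, the Lubotzky--Mann/Segal input), but there are two genuine gaps, and the second is exactly at the step you yourself flag as the main obstacle.

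First, your treatment of the positive virtual Betti number branch does not close. Bieri--Strebel gives a finite-index subgroup $H$ that splits as an HNN extension with finitely generated (hence $FP_2$) base $B$ of infinite index. Under the standing hypothesis ``no surface subgroup'', your own Stallings--Dunwoody/Kapovich--Kleiner argument forces $B$ to be free. You then need to do something with this, and what you propose (feed it back into the $k$-free machine) does not work: one free base in one HNN splitting says nothing about arbitrary $k$-generated subgroups. The paper instead observes that $B$ free implies $cd(H)\le 2$, contradicting the fact that $H$ is $PD(3)$. So in this branch the hypothesis ``no surface subgroup'' is simply impossible, and case~$(1)$ holds outright. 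This cohomological-dimension contradiction is the missing step.

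Second, and more seriously, your justification for infinite index in the remaining branch is incorrect. You claim that if, for some $k$, every finite-index $G_0$ contained a $k$-generated subgroup of finite index, then $G$ would be virtually properly locally cyclic; but that implication fails badly (take $G=\Z^3$ with $k=3$). Lemma~\ref{lem:wild} gives no traction here. The paper's argument is instead quantitative and is where Shalen--Wagreich actually enters: having shown (Proposition~\ref{prop:Betti}) that the only alternative to $v\beta(G)>0$ is $v\beta_p(G)=\infty$ for some prime $p$, one chooses $G_k$ of finite index with $\beta_p(G_k)\ge k+2$, and then \cite[Proposition~1.1]{SW}, applied through the mod-$p$ lower central series, shows directly that any subgroup of $G_k$ generated by at most $k$ elements lies in infinitely many distinct finite-index subgroups of $G_k$, hence has infinite index. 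Kapovich--Kleiner plus ``no surface subgroup'' then gives freeness. You mention the mod-$p$ lower central series in setting up the dichotomy, but this is the place it must be deployed, and with the explicit numerical threshold $\beta_p\ge k+2$; without it the infinite-index conclusion is unsupported.
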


Since a $PD(3)$ group contains an index $2$ orientable $PD(3)$ group, we need only to consider orientable $PD(3)$ groups. 

The main ingredient in the proof of Theorem \ref{th:surface} is Proposition \ref{prop:Betti}, a $PD(3)$-version of a theorem of 
Lubotzky and Mann \cite{LM} concerning residually finite groups of finite rank. The proposition's proof follows the lines of \cite{LM}, except that we avoid the use of Lazard's theory of $p$-adic analytic groups \cite{La} by applying a weak linearity criterion due to Segal \cite{Seg}.

We use 
$$\beta(G) =  \dim_{\Q} H_1(G; \Q)$$ 
to denote the first Betti number of $G$ and 
$$v\beta(G) = \sup_H \{\beta(H) :  \text{$H$ has finite index in $G$}\}$$ 
to denote its virtual first Betti number.

Similarly for $p$ prime,  
$$\beta_{p}(G) =  \dim_{\F_p} H_1(G; \F_p)$$ 
denotes the (mod $p$) first Betti number of $G$ and 
$$v\beta_{p}(G) = \sup_H \{\beta_p(H) :  \text{$H$ has finite index in $G$}\}$$ 
its virtual (mod $p$) first Betti number.  

\begin{prop}\label{prop:Betti}  Let $G$ be an orientable $PD(3)$ group with infinite profinite completion $\widehat{G}$. Then either  

$(1)$ $G$ has a positive virtual first Betti number, or 

$(2)$ there is a prime number $p$ such that $v\beta_{p}(G) = \infty$.
 
\end{prop}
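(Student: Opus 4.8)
The plan is to follow the strategy of Lubotzky--Mann \cite{LM} for residually finite groups of finite rank, adapted to the $PD(3)$ setting, and to argue by contradiction. Suppose $G$ is an orientable $PD(3)$ group with $\widehat{G}$ infinite, but that conclusion (1) fails (so $v\beta(G) = 0$) and conclusion (2) fails (so for every prime $p$ we have $v\beta_p(G) < \infty$). Since $\widehat{G}$ is infinite, $G$ has finite-index subgroups of arbitrarily large index, and by a pigeonhole/residual argument one can pass to a subgroup admitting, for some fixed prime $p$, a quotient of order $p$; concretely, if $\widehat{G}$ is infinite then either $G$ has a finite-index subgroup $H$ with $\beta_p(H) \geq 1$ for some prime $p$, or $\widehat{G}$ has no $p$-content at all, which is impossible for an infinite profinite group (every infinite profinite group has a finite quotient of prime order, hence a nontrivial pro-$p$ quotient for some $p$). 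So fix a prime $p$ for which there is a finite-index subgroup $G_0 \le G$ with $\beta_p(G_0) \ge 1$; replacing $G$ by $G_0$ (still an orientable $PD(3)$ group), we may assume $\beta_p(G) \ge 1$.

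The core of the argument will be to show that the hypotheses ``$v\beta_p(G) < \infty$'' together with ``$v\beta(G) = 0$'' force the pro-$p$ completion $G_{\hat p}$ to be a $p$-adic analytic group of bounded dimension — or, in Segal's formulation, that $G$ has finite \emph{Prüfer-rank} after restricting attention to its pro-$p$ towers — and then to derive a contradiction from Poincaré duality. First I would recall that $v\beta_p(G) < \infty$ is precisely the condition that ensures, via a result of Lubotzky--Mann type, that the pro-$p$ completion has finite rank; this is where Segal's weak linearity criterion \cite{Seg} replaces Lazard's theory, giving that $G$ has a finite-index subgroup $G_1$ whose pro-$p$ completion is uniformly powerful, hence torsion-free $p$-adic analytic of some dimension $d$. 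Then Poincaré duality enters: for an orientable $PD(3)$ group, $H^i(G_1; \F_p) \cong H_{3-i}(G_1; \F_p)$, so $\dim_{\F_p} H^i(G_1;\F_p)$ is symmetric about $i = 3/2$, and the Euler characteristic over $\F_p$ vanishes; moreover $H^3(G_1;\F_p) \cong \F_p \ne 0$. A uniformly powerful pro-$p$ group of dimension $d$ has $H^*(\,\cdot\,;\F_p)$ an exterior algebra on $d$ generators, so its cohomology is concentrated in degrees $\le d$ with $H^d \ne 0$; matching this against the $PD(3)$ constraint (top nonvanishing degree exactly $3$, Poincaré duality, $\beta_p \ge 1$) should pin down $d = 3$ and force $H_1(G_1;\F_p) = \F_p^3$, i.e. $G_1$ pro-$p$ completes to $\Z_p^3$; but then $G_1$ surjects $\Z^3$ through its pro-$p$ quotient being abelian of rank $3$ — more carefully, one shows $G_1$ has a quotient $\Z_p^3$ which forces $\beta(G_1) \ge 1$ once one checks the abelianization is infinite, contradicting $v\beta(G) = 0$.

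The technical heart — and the step I expect to be the main obstacle — is establishing that bounded $v\beta_p$ genuinely propagates to finite rank of the \emph{group} (not merely of individual finite quotients), and then controlling the dimension $d$ of the resulting $p$-adic analytic pro-$p$ completion using only $PD(3)$ duality over $\F_p$. The subtlety is that $PD(3)$ is a statement about $\Z G$-cohomology, and one must transfer it to the pro-$p$ world: a finite-index subgroup with uniformly powerful pro-$p$ completion is still $PD(3)$ as a discrete group, and the map from its $\F_p$-cohomology to the continuous cohomology of its pro-$p$ completion is an isomorphism in low degrees (cf. the Lubotzky--Mann dimension subgroup analysis), but one needs enough of this comparison to read off that the pro-$p$ cohomological dimension is exactly $3$ and that the exterior-algebra structure is incompatible with any $d \ne 3$. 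I would set this up by: (i) quoting the Lubotzky--Mann/Segal machinery to get $G_1$ with $G_{1,\hat p}$ uniform of dimension $d$; (ii) using $H^*_{\mathrm{cont}}(G_{1,\hat p};\F_p) \cong \Lambda^*(\F_p^d)$ and the inflation map to conclude $d \le \mathrm{cd}_{\F_p}(G_1) = 3$; (iii) using Poincaré duality $H^d \hookrightarrow$ something forcing $d \ge 3$, hence $d = 3$, $H_1(G_1;\F_p) = \F_p^3$, $G_{1,\hat p} \cong \Z_p^3$; (iv) concluding the discrete abelianization $H_1(G_1;\Z)$ surjects onto $\Z_p^3/p$ compatibly for all powers, so $H_1(G_1;\Z)$ is infinite, giving $\beta(G_1) > 0$ — the desired contradiction with the assumed failure of (1). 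If the duality bookkeeping in (ii)--(iii) is cleaner phrased via $v\beta_p$ directly (a uniform pro-$p$ group has $v\beta_p$ equal to $d$ on every finite-index subgroup, and $PD(3)$ plus $v\beta = 0$ forbids $d \ge 1$ unless one also exhibits a genuine $\Z$-quotient), I would route the final contradiction through comparing $v\beta_p(G) = d < \infty$ with the structure forced by duality, exactly as in \cite{LM}.
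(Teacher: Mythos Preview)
Your approach has a genuine gap at the critical step. Even granting that you can pass to a finite-index subgroup $G_1$ whose pro-$p$ completion is uniformly powerful of some dimension $d$, and even granting $d=3$, the conclusion ``$G_{1,\hat p}\cong\Z_p^3$'' is \emph{false} in general. A uniformly powerful pro-$p$ group of dimension $3$ need not be abelian: the first congruence subgroup $\Gamma(p)=\ker\big(SL_2(\Z_p)\to SL_2(\F_p)\big)$ for $p\geq 5$ is uniformly powerful of dimension $3$, yet its abelianisation is the finite group $(\F_p)^3$ (since $[\mathfrak{sl}_2,\mathfrak{sl}_2]=\mathfrak{sl}_2$ forces $[\Gamma(p),\Gamma(p)]=\Gamma(p^2)$), and the same holds for every term of its lower $p$-series. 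So the pro-$p$ picture alone, at a single prime, cannot force $\beta(G_1)>0$. Your step (iv) therefore collapses.

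The surrounding bookkeeping is also shaky. The inflation map $H^{*}_{\mathrm{cont}}(G_{1,\hat p};\F_p)\to H^{*}(G_1;\F_p)$ is only an isomorphism in degree $1$ and injective in degree $2$; there is no general mechanism to transport $H^d_{\mathrm{cont}}\ne 0$ into discrete degree-$d$ cohomology, so ``$d\leq\mathrm{cd}_{\F_p}(G_1)=3$'' does not follow from inflation. (The bound $d\leq 3$ is actually available, but for a different reason: $d=\beta_p(G_1)$, and the Shalen--Wagreich inequality in the paper's Lemma~\ref{lem:rank} gives $v\beta_p(G)\leq 3$.) More importantly, your argument never uses the hypothesis at primes other than the one you fix, and this is exactly what the paper exploits: assuming $v\beta_p(G)<\infty$ for \emph{every} $p$, Lubotzky--Mann (finite sectional $2$-rank) yields a finite-index $G_1$ all of whose finite quotients are solvable, Kov\'acs bounds their Pr\"ufer rank, and then Segal's theorem applies to the maximal residually finite quotient $\Gamma=G_1/K$ to conclude that $\Gamma$ is virtually nilpotent-by-abelian---an infinite such group has $v\beta>0$ for elementary reasons. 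No single-prime $p$-adic analytic structure, no Lazard cohomology, and no comparison of discrete versus continuous cohomology is needed.
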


Before proving Proposition \ref{prop:Betti} we introduce some notation and terminology.

We say that $G$ has sectional $p$-rank less than or equal to $r$ if the $p$-Sylow subgroups of the finite quotients of $G$ have Pr\"ufer-rank at most $r$. The {\it sectional $p$-rank of $G$}, denoted $u_p(G)$, is the smallest $r$ such that $G$ has sectional $p$-rank at most $r$. 

\begin{lemma}\label{lemma:sectionalrank}
If $v\beta_{p}(G) $ is finite, then every finite index subgroup $H$ of $G$ has sectional $p$-rank $u_p(H) \leq v\beta_{p}(G)$.
\end{lemma}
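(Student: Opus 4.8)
The plan is to reduce the statement to a counting argument relating the Prüfer-rank of a finite $p$-group to its mod-$p$ first homology, and then to control mod-$p$ first homology of finite index subgroups of $H$ via the hypothesis that $v\beta_p(G)$ is finite. First I would observe that since $H$ has finite index in $G$, every finite quotient of $H$ arising from a subgroup of finite index in $H$ is (after passing to a further finite index subgroup if necessary) also controlled by finite index subgroups of $G$; in particular $v\beta_p(H) \leq v\beta_p(G)$, because a finite index subgroup of $H$ is a finite index subgroup of $G$. So it suffices to show $u_p(H) \leq v\beta_p(H)$, i.e. we may as well rename $H$ as $G$ and prove: if $v\beta_p(G)$ is finite then $u_p(G) \leq v\beta_p(G)$.

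The key input is the standard fact about finite $p$-groups: if $P$ is a finite $p$-group then its Prüfer-rank $u(P)$ equals the maximum, over all sections $Q$ of $P$ (subquotients), of $\dim_{\F_p} H_1(Q;\F_p) = \dim_{\F_p} Q/[Q,Q]Q^p$; equivalently, $u(P) = \max_{Q \leq P} d(Q)$ where $d(Q)$ is the minimal number of generators of $Q$, which by Burnside's basis theorem equals $\dim_{\F_p} H_1(Q;\F_p)$. (This is exactly the identity used in the Lubotzky–Mann circle of ideas.) So to bound $u_p(G)$ by $v\beta_p(G)$ it is enough to show: for every finite quotient $\phi\colon G \twoheadrightarrow \bar G$, every $p$-Sylow subgroup $P$ of $\bar G$, and every subquotient $Q$ of $P$, we have $\dim_{\F_p} H_1(Q;\F_p) \leq v\beta_p(G)$.

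The heart of the argument is therefore: realize such a $Q$ as a quotient of a finite index subgroup of $G$. Given $P \leq \bar G$ a $p$-Sylow subgroup, let $G_0 = \phi^{-1}(P)$; this is a finite index subgroup of $G$ with $\phi(G_0) = P$. A subquotient $Q = A/B$ with $B \trianglelefteq A \leq P$ then pulls back: let $G_1 = \phi^{-1}(A)$, a finite index subgroup of $G$, and $N = \phi^{-1}(B) \trianglelefteq G_1$, so $G_1/N \cong Q$. Hence $Q$ is a quotient of the finite index subgroup $G_1 \leq G$, which gives $\dim_{\F_p} H_1(Q;\F_p) \leq \dim_{\F_p} H_1(G_1;\F_p) = \beta_p(G_1) \leq v\beta_p(G)$, since mod-$p$ first homology is functorial and surjective maps induce surjections on $H_1(-;\F_p)$. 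Taking the maximum over all such $Q$ and all finite quotients yields $u_p(G) \leq v\beta_p(G)$, and unwinding the reduction of the first paragraph gives $u_p(H) \leq v\beta_p(H) \leq v\beta_p(G)$ for every finite index $H \leq G$.

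I expect the only genuinely delicate point to be the clean formulation of the $p$-group fact $u(P) = \max_Q d(Q)$ over subquotients $Q$ and making sure the ``subquotient'' passes correctly through $\phi^{-1}$ — everything else is bookkeeping with finite index subgroups and the right-exactness of $H_1(-;\F_p)$. One should also double-check the harmless reduction $v\beta_p(H)\le v\beta_p(G)$ for $H$ of finite index in $G$, which is immediate since finite index subgroups of $H$ are finite index subgroups of $G$.
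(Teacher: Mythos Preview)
Your proof is correct and follows essentially the same route as the paper: pull back a subgroup of the Sylow $p$-subgroup realising the Pr\"ufer rank to a finite index subgroup of $G$, then use that surjections induce surjections on $H_1(-;\F_p)$. The paper's version is slightly leaner --- it simply picks a subgroup $T\leq S_p$ with $d(T)=u(S_p)$ and sets $K=f^{-1}(T)$, so your detour through general subquotients $A/B$ is unnecessary (the Pr\"ufer rank of a finite $p$-group is already attained on a subgroup), but this is harmless.
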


\begin{proof} 
Let $H \subset G$ be a subgroup of finite index in $G$ and let $f: H \to F$ be an epimorphism onto a finite quotient of $H$. Denote by $S_p \subset  F$ the $p$-Sylow subgroup of $F$ and let $d = u(S_p)$ be its Pr\"ufer-rank.  There is a subgroup $T$ of $S_p$ with rank $rk(T) = d$ and $T /\Phi(T) \cong  (\Z/p\Z)^d$, where $\Phi(T)$ is the Frattini subgroup of $T$. 
The finite index subgroup $K = f^{-1}(T)$ of $H$ is also of finite index in $G$ and the restriction to $K$ of $f$ induces an epimorphism $K \to T \to T /\Phi(T) \cong  (\Z/p\Z)^d$. Thus $v\beta_p(G) \geq \beta_p(K) \geq d$.
\end{proof}

\begin{lemma}\label{lem:rank} Let $G$ be an orientable $PD(3)$-group and $p$ be a prime number. 
 If $v\beta_{p}(G)$ is finite, then $v\beta_{p}(G) \leq 3$ and 
any finite index subgroup $H$ of $G$ has sectional $p$-rank $u_p(H) \leq 3$.
\end{lemma}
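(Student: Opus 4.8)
The plan is to bootstrap from Lemma~\ref{lemma:sectionalrank}: once we know that finiteness of $v\beta_p(G)$ forces every finite index subgroup $H$ to have sectional $p$-rank $u_p(H) \leq v\beta_p(G)$, it suffices to bound $v\beta_p(G)$ itself by $3$. Since finite index subgroups of orientable $PD(3)$ groups are again orientable $PD(3)$ groups, and $v\beta_p(H) \leq v\beta_p(G)$ for such $H$, the bound $u_p(H) \leq 3$ will follow immediately from the bound on $v\beta_p$ applied to $H$. So the whole content is the inequality $v\beta_p(G) \leq 3$.

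To prove $v\beta_p(G) \leq 3$, I would argue by contradiction: suppose some finite index subgroup $H \leq G$ has $\beta_p(H) = \dim_{\F_p} H_1(H;\F_p) \geq 4$. Replacing $G$ by $H$ (still an orientable $PD(3)$ group), we may assume $\dim_{\F_p} H_1(G;\F_p) \geq 4$, equivalently $\dim_{\F_p} H^1(G;\F_p) \geq 4$. Now invoke Poincar\'e duality with $\F_p$ coefficients: for an orientable $PD(3)$ group, $H^i(G;\F_p) \cong H_{3-i}(G;\F_p)$, and the cup product pairing $H^1(G;\F_p) \times H^2(G;\F_p) \to H^3(G;\F_p) \cong \F_p$ is nondegenerate, forcing $\dim H^1 = \dim H^2$. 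The key additional input is the alternating cup-square structure: for $p$ odd, the cup product $H^1(G;\F_p)\times H^1(G;\F_p)\to H^2(G;\F_p)$ is an alternating bilinear form composed with duality, and a standard linear-algebra/Poincar\'e-duality argument (the cup product $\alpha \mapsto \alpha \cup -$ giving an isomorphism $H^1 \to H^2 = (H^1)^*$) shows that the trilinear cup form $H^1\times H^1\times H^1 \to \F_p$ is alternating of "rank" constraints that are incompatible with $\dim H^1 \geq 4$ unless the form degenerates, contradicting nondegeneracy of the $H^1$--$H^2$ pairing. Concretely: if $\dim_{\F_p} H^1(G;\F_p) = n$, nondegeneracy of the Poincar\'e pairing means every nonzero $\alpha \in H^1$ has $\alpha \cup \beta \neq 0$ for some $\beta \in H^1$ when $n$ forces $H^2$ to be spanned by such products; chasing the alternating trilinear form $(\alpha,\beta,\gamma)\mapsto \alpha\cup\beta\cup\gamma$ on an $n$-dimensional space and the antisymmetry relations pins $n \leq 3$. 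For $p=2$ one runs the same argument keeping track of the symmetric (rather than alternating) nature of the cup square, which is why the threshold there is shifted — but for the purposes of \emph{this} lemma we only need $v\beta_p(G) \leq 3$, which holds for all $p$.

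The cleanest route, and the one I expect to actually use, is to cite the known classification results on $PD(3)$ groups $G$ with $\beta_p(G)$ large: this is exactly the kind of statement established via the cup-product argument above in the $3$-manifold setting (cf.~\cite{SW}, \cite{Pa}) and transcribed to $PD(3)$ groups by Hillman (\cite[section 3]{Hil3}, \cite[Chapter 2]{Hil4}). So the proof is: (i) reduce to showing $v\beta_p(G)\leq 3$; (ii) pass to a hypothetical finite index subgroup with $\beta_p \geq 4$; (iii) derive a contradiction from Poincar\'e duality over $\F_p$ and the alternating cup-product form; (iv) conclude $u_p(H)\leq v\beta_p(G)\leq 3$ via Lemma~\ref{lemma:sectionalrank} and the fact that finite index subgroups of orientable $PD(3)$ groups are orientable $PD(3)$ groups. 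The main obstacle is step (iii): making the linear-algebra argument about the alternating trilinear cup form completely rigorous (especially the uniform treatment of $p=2$ versus odd $p$) and locating the precise reference in Hillman's work rather than reproving it; if a clean citation is unavailable, I would include the short cup-product argument explicitly, since it is elementary once Poincar\'e duality with field coefficients is in hand.
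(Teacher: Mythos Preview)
Your reduction via Lemma~\ref{lemma:sectionalrank} is fine, but step~(iii) is wrong as stated. You assert that for an orientable $PD(3)$ group the alternating trilinear cup form on $H^1(G;\F_p)$, together with nondegeneracy of the $H^1$--$H^2$ pairing, forces $\dim_{\F_p} H^1(G;\F_p)\leq 3$. This is simply false: take $G=\pi_1(\Sigma_g\times S^1)$ with $g\geq 2$, an orientable $PD(3)$ group with $\beta_p(G)=2g+1\geq 5$. There is no intrinsic bound on $\beta_p$ of a single $PD(3)$ group; the hypothesis that $v\beta_p(G)$ is \emph{finite} is essential and must be used. Your sketch never uses it, and the linear-algebra claim about alternating trilinear forms being degenerate in dimension $\geq 4$ is not true (and Poincar\'e duality does not assert nondegeneracy of the trilinear form, only of the $H^1\times H^2\to H^3$ pairing; the cup map $H^1\otimes H^1\to H^2$ need not be surjective).

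The paper's argument is different in exactly this place. One does not extract a contradiction from $\beta_p(G)\geq 4$ alone; instead one passes to the finite index subgroup $G_1=\ker\bigl(G\to H_1(G;\F_p)\bigr)$ and invokes the Shalen--Wagreich inequality $\beta_p(G_1)\geq \tfrac12\,\beta_p(G)\bigl(\beta_p(G)-1\bigr)$, whose proof uses Poincar\'e duality together with the Stallings five-term exact sequence in mod~$p$ homology (this is where the cup product actually enters, via the identification of $H_2$ of the elementary abelian quotient). For $\beta_p(G)\geq 4$ this gives $\beta_p(G_1)>\beta_p(G)$, and iterating produces a strictly increasing sequence, so $v\beta_p(G)=\infty$, contradicting the finiteness hypothesis. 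The references you cite (\cite{SW}, \cite{Hil3}) contain this growth estimate, not a direct bound $\beta_p\leq 3$; you have misremembered what they say.
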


\begin{proof}  
Let $G$ be an orientable $PD(3)$-group and let $G_1$ be the kernel of the epimorphism $G \to H_1(G,\F_p)$. Lemma 1.3 of \cite{SW} is stated for closed orientable 3-manifold groups, but its proof follows from Poincar\'e duality and a homological computation based on the $5$-term Stallings' exact sequence for the (mod $p$) homology  \cite{Sta1}. Hence  the inequality  $\beta_p(G_1) \geq  \frac{1}{2}\beta_{p}(G)(\beta_{p}(G) -1)$ remains valid for an orientable $PD(3)$-group $G$. Therefore $\beta_p(G_1) > \beta_p(G$ when $\beta_p(G) \geq 4$. Proceeding inductively, it follows that $v\beta_{p}(G) = \infty$. The lemma is a consequence of this observation and Lemma \ref{lemma:sectionalrank}. 
\end{proof} 

\begin{prop}\label{prop:soluble}
Let $G$ be an orientable $PD(3)$-group such that $v\beta_{p}(G)$ is finite for every prime $p$.
Then $G$ has a finite index subgroup $G_1$ each of whose  finite quotients is solvable with Pr\"ufer rank $\leq 4$.
\end{prop}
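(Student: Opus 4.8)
The plan is to combine the rank bounds already established in Lemma~\ref{lem:rank} with the structure theory of finite groups of bounded Pr\"ufer-rank, together with a standard ``virtual solubility'' argument in the profinite setting. By Lemma~\ref{lem:rank}, since $v\beta_p(G)$ is finite for every prime $p$, each finite index subgroup $H \subseteq G$ has sectional $p$-rank $u_p(H) \le 3$ for every $p$. In particular, for every finite quotient $F$ of every finite index subgroup of $G$, each Sylow subgroup $S_p$ of $F$ has Pr\"ufer-rank at most $3$. I would first record the consequence that such finite groups have bounded rank in the uniform sense: there is an absolute constant (one may take $4$, as in the statement) bounding the rank of every finite quotient of a suitable finite index subgroup $G_1$. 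The only subtlety is to pass from ``each Sylow has rank $\le 3$'' to ``the whole group has rank $\le 4$'', and for this I would invoke the theory of finite groups of bounded sectional rank: by a theorem going back to work in the spirit of Lubotzky--Mann (see \cite{LM}), a finite group all of whose Sylow subgroups have Pr\"ufer-rank at most $r$ has a normal series with factors of rank bounded in terms of $r$, and in fact has rank at most $r+1$ after passing to a subgroup of index bounded in terms of $r$ only; applying this at the level of finite quotients and taking the corresponding finite index subgroup $G_1 \le G$ (which exists because there are only finitely many subgroups of $G$ of any given index, $G$ being finitely generated) yields that every finite quotient of $G_1$ has Pr\"ufer-rank $\le 4$.

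Next I would upgrade ``bounded rank of finite quotients'' to ``solubility of finite quotients''. Here the mechanism is the classification of finite simple groups, or rather its standard consequence that there are only finitely many finite simple groups of any given Pr\"ufer-rank, each of which can be generated by at most a bounded number of elements, and moreover that a finite group of bounded rank which is not soluble must involve one of these finitely many simple groups as a section. The key point, exactly as in the Lubotzky--Mann analysis, is that if $G_1$ had a finite quotient which is not soluble, then it would have such a quotient $F$ with $F/\operatorname{Rad}(F)$ a nontrivial product of simple groups of rank $\le 4$; one then uses the fact that this configuration, iterated in a tower of finite quotients, forces the sectional $p$-rank to grow for some prime $p$ dividing the order of one of the finitely many simple groups involved --- contradicting the bound $u_p(G_1) \le 3$ obtained from Lemma~\ref{lem:rank}. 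More precisely, a nonabelian finite simple section of rank $\le 4$ has, for a suitable prime $p$, a $p$-subgroup of rank $\ge 2$, and chains of such sections produce $p$-sections of unbounded rank, which is impossible since $u_p(G_1)$ is finite. Hence after possibly replacing $G_1$ by a further finite index subgroup (again using finiteness of the set of subgroups of bounded index), every finite quotient of $G_1$ is soluble, and by the previous paragraph has Pr\"ufer-rank $\le 4$.

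The main obstacle, I expect, is the second step: extracting solubility of all finite quotients from the rank bound. The rank bound alone is not quite enough in a single finite quotient --- one genuinely needs to exploit that \emph{all} finite quotients (equivalently, all finite sections, since finite index subgroups of $G$ are again $PD(3)$ and subject to Lemma~\ref{lem:rank}) have bounded sectional $p$-rank, and to feed a hypothetical nonabelian simple section into a tower that violates this. This is where the classification of finite simple groups enters, and where one must be careful about whether a single bad finite quotient suffices to reach a contradiction or whether one must build an inverse system; I would handle it by the pro-$p$ / inverse-limit argument of \cite{LM}, noting that it is precisely here that the hypothesis $u_p(G) < \infty$ for all $p$ (a consequence of $v\beta_p(G) < \infty$ for all $p$, via Lemma~\ref{lem:rank}) is used in full strength. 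The passage from $G$ to the fixed finite index subgroup $G_1$ is routine: at each stage one intersects finitely many subgroups of bounded index, so the final $G_1$ still has finite index in $G$, hence is itself an orientable $PD(3)$-group to which all earlier lemmas apply.
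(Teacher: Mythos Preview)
Your proposal identifies the correct ingredients --- Lemma~\ref{lem:rank}, the Lubotzky--Mann result \cite{LM}, and a Sylow-rank-to-global-rank bound --- but you have run them in the wrong order, and this causes your first step to misfire.

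The rank bound you invoke in your first paragraph (``a finite group all of whose Sylow subgroups have Pr\"ufer-rank at most $r$ \ldots\ has rank at most $r+1$ after passing to a subgroup of index bounded in terms of $r$'') is not a theorem as stated. The sharp result you want is Kov\'acs' theorem \cite{Kov}: a \emph{soluble} finite group whose Sylow subgroups all have Pr\"ufer-rank at most $r$ has Pr\"ufer-rank at most $r+1$. Solubility is an essential hypothesis there; without it one has no access to a Hall system and the argument collapses. So you cannot extract the $\le 4$ bound before establishing solubility.

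The paper's proof simply reverses your two steps. First, Lubotzky--Mann \cite{LM} is cited as a black box: a group with finite sectional $2$-rank (which you have from Lemma~\ref{lem:rank}) has a finite index subgroup $G_1$ all of whose finite quotients are soluble. Only the prime $p=2$ is needed here, and no rank bound on quotients is required in advance. Second, once every finite quotient of $G_1$ is soluble and has all Sylow ranks $\le 3$ (again Lemma~\ref{lem:rank}, applied to $G_1$), Kov\'acs gives Pr\"ufer-rank $\le 4$ immediately. That is the entire argument --- two citations plus Lemma~\ref{lem:rank}.

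Your second paragraph is, in effect, a sketch of the \emph{proof} of the Lubotzky--Mann theorem (nonabelian simple sections of bounded rank, stacked in a tower, force unbounded sectional $p$-rank for some fixed $p$). That sketch is broadly correct, but there is no need to reprove \cite{LM} here; cite it and move on. If you do want to unpack it, note that the input is only finiteness of the sectional $2$-rank, not the full hypothesis on all primes, and the conclusion is the existence of $G_1$ --- not merely the nonexistence of a single bad quotient.
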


\begin{proof}
According to Lubotzky and Mann \cite{LM}, a group $G$ with finite $2$-rank has a finite index subgroup $G_1$ all of whose finite quotients are solvable. On the other hand, Kov\'acs \cite{Kov} has shown that if $S$ is a finite soluble group all of whose Sylow subgroups have Pr\"ufer-rank at most $r$, then $S$ has Pr\"ufer rank at most $r + 1$. Proposition \ref{prop:soluble} now follows from Lemma \ref{lem:rank}. 
\end{proof}

\begin{proof}[Proof of Proposition \ref{prop:Betti}]  
Suppose that $G$ is an orientable  $PD(3)$-group with infinite profinite completion such that $v\beta_{p}(G)$ is finite for every prime $p$.
Let $G_1$ be the finite index subgroup of $G$ given by Proposition \ref{prop:soluble} and define $K$ to be the intersection of all finite index subgroups of $G_1$. 
Since $\widehat{G}_1$ has finite index in $\widehat{G}$, it is infinite, so the residually finite quotient $\Gamma = G_1/K$ is also infinite. By \cite{Seg}, $\Gamma$ is virtually nilpotent by abelian, and as an infinite, finitely generated nilpotent by abelian group has a positive virtual first Betti number, we are done.  
\end{proof}

The key ingredient of the proof of Theorem \ref{th:surface} is the following result of Kapovich and Kleiner. 

\begin{prop} \label{prop:coarse} {\rm (\cite[Corollary 1.3(2)]{KK})}   An infinite index $FP_2$ subgroup of an orientable $PD(3)$ group
either contains a surface subgroup or is free.
\end{prop}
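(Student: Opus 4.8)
The statement is \cite[Corollary 1.3(2)]{KK}, and the plan is to follow Kapovich and Kleiner's coarse Alexander duality. Let $H$ be an $FP_2$ subgroup of infinite index in the orientable $PD(3)$ group $G$. First I would invoke Strebel's theorem, that a subgroup of infinite index in a $PD(n)$ group has cohomological dimension at most $n-1$, to get $\mathrm{cd}(H)\le 2$. If $\mathrm{cd}(H)\le 1$, the Stallings--Swan theorem makes $H$ free and there is nothing more to do; so the real content is the case $\mathrm{cd}(H)=2$, where $H$ sits in $G$ in the borderline ``codimension one'' situation and the Poincar\'e duality of $G$ has to be exploited.

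For that case, the plan is to pass to coarse geometry. Fix a contractible complex $X$ on which $G$ acts freely and cocompactly; since $G$ is $PD(3)$, $X$ is a coarse $PD(3)$ space, so $\bar H^{*}_{c}(X)\cong\bar H^{*}_{c}(\R^{3})$. Using that $H$ is $FP_2$, I would realize $H$ inside $X$ as an $H$-cocompact coarse subset $Y$ that is coarsely two-dimensional and uniformly acyclic in degrees $\le 1$; because $\mathrm{cd}(H)=2$, the reduced coarse cohomology of $Y$ is moreover nonzero in degree $2$. Since $Y$ has coarse codimension one in the coarse $PD(3)$ space $X$, I would feed this into the coarse Poincar\'e--Alexander duality sequence, which ties together $\bar H^{*}(Y)$, the coarse compactly supported cohomology of the coarse complementary components of $Y$ in $X$, and $\bar H^{*}_{c}(X)$. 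The upshot should be that $Y$ coarsely separates $X$, that the closure of each complementary coarse component is a coarse $PD(3)$ space relative to its frontier, and hence that this frontier --- which is carried by $Y$ --- is a nonempty disjoint union of coarse $PD(2)$ spaces. I would then pick one such coarse $PD(2)$ piece $P$, let $H'\le H$ be its stabilizer, observe that $H'$ acts geometrically on $P$, so that $H'$ is a virtual $PD(2)$ group and hence virtually a closed surface group by the theorem of Eckmann, Linnell and M\"{u}ller \cite{Ec} (the identification of coarse $PD(2)$ groups with virtual surface groups also drawing on quasi-isometric rigidity of the Euclidean and hyperbolic planes). This produces a closed surface subgroup inside $H'\le H$, completing the dichotomy.

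The hard part will be the coarse-geometric step, in particular the passage from the purely algebraic hypothesis ``$H$ is $FP_2$ of infinite index'' to a coarse subset $Y\subset X$ to which coarse duality applies: $H$ need not be quasi-isometrically embedded in $G$, so one cannot treat it naively as an embedded codimension-one ``hypersurface'' in $X$, and controlling the distortion while building $Y$, its coarse complementary components, and their coarse frontiers is where essentially all the work lives. That is precisely what the Kapovich--Kleiner machinery of coarse $PD(n)$ spaces and uniform acyclicity is designed to do; granting it, the $PD(3)$-specific dichotomy above --- virtually free versus contains a closed surface subgroup --- reduces to bookkeeping with the duality sequence plus the cited recognition of $PD(2)$ groups. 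A smaller, secondary obstacle is that final recognition step, upgrading ``coarse $PD(2)$'' to ``virtually a surface group'', which itself rests on quasi-isometric rigidity of surface groups and on the Eckmann--Linnell--M\"{u}ller identification of $PD(2)$ groups with closed surface groups.
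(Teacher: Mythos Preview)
The paper gives no proof of this proposition: it is simply quoted as \cite[Corollary 1.3(2)]{KK} and used as a black box. So there is nothing in the paper to compare your argument against beyond the citation itself.

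Your sketch is a fair high-level outline of the Kapovich--Kleiner argument and correctly isolates the three ingredients the paper also leans on elsewhere: Strebel's bound $\mathrm{cd}(H)\le 2$ for infinite-index subgroups, Stallings--Swan for $\mathrm{cd}\le 1$, and coarse Alexander duality in the ambient coarse $PD(3)$ space for the codimension-one case. One point worth flagging is that you implicitly treat $H$ as one-ended when you set up the coarse separation picture; in the actual Kapovich--Kleiner statement (and as the present paper uses it in the proof of Proposition~\ref{prop:tits}) the output is a free-product decomposition $H=F*H_1*\cdots*H_n$ with $F$ free and each $H_i$ the base of a $PD(3)$ pair, and the surface subgroup then appears as a peripheral group of one of those pairs. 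Your ``pick a coarse $PD(2)$ piece $P$ and look at its stabiliser'' is morally the same step, but the honest route to it goes through that graph-of-groups decomposition rather than a single global separation argument. As a strategy paragraph this is fine; just be aware that the bookkeeping in the multi-ended case is hidden in the phrase ``granting the Kapovich--Kleiner machinery''.
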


Theorem \ref{th:surface} now follows from Claims \ref{claim:positive Betti} and \ref{claim:p-Betti} below.

\begin{claim} \label{claim:positive Betti} 
Suppose that $G$ is an almost coherent orientable $PD(3)$ group with $\widehat{G}$ infinite. If $v\beta_{p}(G)$ is finite for each prime $p$,  then $G$ contains a surface group.
\end{claim}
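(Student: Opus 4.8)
The plan is to combine Proposition \ref{prop:Betti} with the Kapovich--Kleiner dichotomy (Proposition \ref{prop:coarse}), passing to a suitable finite-index subgroup where a positive first Betti number is available. Since $v\beta_p(G)$ is finite for every prime $p$ by hypothesis, case $(2)$ of Proposition \ref{prop:Betti} is excluded, so we are in case $(1)$: $G$ has positive virtual first Betti number. Hence there is a finite-index subgroup $G' \leq G$ with $\beta(G') \geq 1$; note $G'$ is again an orientable $PD(3)$ group (passing to an index $\leq 2$ subgroup if necessary to restore orientability), it is almost coherent since it is a subgroup of $G$, and $\widehat{G'}$ is infinite since it has finite index in $\widehat{G}$. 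It suffices to produce a surface subgroup in $G'$, as this is then a surface subgroup of $G$.

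Next I would use the positive first Betti number to obtain an epimorphism $\varphi : G' \twoheadrightarrow \Z$ and consider $N = \ker \varphi$. The key point is that $N$ is a finitely generated, infinite-index normal subgroup of the $PD(3)$ group $G'$; here one invokes Bieri--Strebel-type results (cf.\ \cite[Theorem A]{BiSt}, as acknowledged in the paper) together with the fact that $G'$ is almost coherent to conclude that $N$ is $FP_2$ — either $N$ is finitely generated, in which case almost coherence makes it $FP_2$, or else the Bieri--Strebel machinery still delivers an $FP_2$ subgroup carrying the relevant homology. Since $N$ (or the relevant subgroup) has infinite index in $G'$ and is $FP_2$, Proposition \ref{prop:coarse} applies: $N$ either contains a surface subgroup — in which case we are done — or $N$ is free. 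In the latter case, $G'$ is an ascending HNN extension (or more simply, $G'$ is $(\text{free})$-by-$\Z$), and I would derive a contradiction with the $PD(3)$ condition: a $(\text{free})$-by-$\Z$ group has cohomological dimension at most $2$, whereas a $PD(3)$ group has cohomological dimension $3$. This rules out the free case and forces the surface subgroup.

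The main obstacle I anticipate is the passage ``positive virtual first Betti number $\Rightarrow$ an $FP_2$, infinite-index subgroup to which Proposition \ref{prop:coarse} applies.'' The naive kernel of a map to $\Z$ need not be finitely generated, so one genuinely needs the Bieri--Strebel theory of the BNS-invariant to select a character $\varphi$ whose kernel (or an appropriate associated subgroup) is $FP_2$; this is precisely the delicate step for which the authors thank Ralph Strebel in the acknowledgements. Once that technical input is in place, the dichotomy of Proposition \ref{prop:coarse} together with the cohomological dimension obstruction closes the argument cleanly. A secondary, routine point is the bookkeeping about orientability when passing to finite-index subgroups, which costs at most an extra index-$2$ step and does not affect the conclusion.
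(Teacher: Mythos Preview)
Your approach is essentially the paper's: invoke Proposition \ref{prop:Betti} to obtain a finite-index subgroup $H$ mapping onto $\mathbb Z$, extract from this an $FP_2$ subgroup of infinite index, apply the Kapovich--Kleiner dichotomy (Proposition \ref{prop:coarse}), and rule out the free branch by a cohomological-dimension count.

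The one point that needs sharpening is the subgroup to which Proposition \ref{prop:coarse} is applied. You take $N=\ker\varphi$, but as you yourself flag, $N$ need not be finitely generated, so almost coherence does not make it $FP_2$ and Proposition \ref{prop:coarse} is not directly available for $N$. The paper does not try to repair $N$ or to select a special character via BNS-type considerations; instead it uses what Bieri--Strebel's Theorem~A actually delivers for a group of type $FP$ mapping onto $\mathbb Z$: an HNN decomposition of $H$ with \emph{finitely generated} base $A$ and associated subgroups of infinite index. Almost coherence then makes $A$ an $FP_2$ subgroup of infinite index in $H$, Proposition \ref{prop:coarse} applies to $A$, and if $A$ is free the HNN structure gives $cd(H)\le cd(A)+1\le 2$, contradicting $PD(3)$. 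So the fix is simply to replace the kernel by the HNN base; once you do that, your outline matches the paper exactly, and no orientability bookkeeping beyond what you already noted is needed.
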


\begin{proof} 
By Proposition \ref{prop:Betti}, $v\beta(G) \geq 1$, so there is a finite index subgroup $H$ of $G$ which admits an epimorphism onto $\mathbb{Z}$. Since $H$ is a PD(3) group, it is of type FP by 
\cite{BiEc2}, so the Bieri-Strebel's theorem \cite[Theorem A]{BiSt} implies that $H$ splits as an HNN extension with finitely generated base and the splitting subgroups of infinite index. Since $G$ is almost coherent, the base group of the HNN extension is $FP(2)$. If $G$ does not contain a surface group, Proposition \ref{prop:coarse} implies that the base group of the HNN extension is free. But this implies that the cohomological dimension of $H$ is $2$, which is impossible since $H$ is a $PD(3)$ group.
\end{proof}

\begin{claim} \label{claim:p-Betti} 
If there is  a prime number $p$ such that  $v\beta_{p}(G) = \infty$ and  $G$ does not contain a surface group, then $G$ is virtually $k$-free for any $k \geq 1$.
\end{claim}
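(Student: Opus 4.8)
The plan is to build, for each fixed $k\ge 1$, a finite index subgroup of $G$ all of whose $k$-generator subgroups are free. The hypothesis $v\beta_p(G)=\infty$ provides finite index subgroups of $G$ with arbitrarily large mod-$p$ first Betti number, hence by Lemma \ref{lem:rank} the sectional $p$-rank of $G$ is infinite; in particular $G$ is \emph{not} virtually of finite Pr\"ufer-rank, so $u_p(H)=\infty$ for every finite index $H\le G$. First I would fix $k$ and look for a finite index $H\le G$ and a chain of finite index normal subgroups $H=H_0\rhd H_1\rhd\cdots$ whose intersection is trivial (residual finiteness is not known, but one only needs finitely many steps): the idea is that a $k$-generator subgroup $\Gamma\le H$ which is \emph{not} free must, by Proposition \ref{prop:coarse}, either have finite index in $H$ — hence be a $PD(3)$ group itself, hence of Pr\"ufer-rank exactly $\mathrm{rk}(\Gamma)\le k$ — or contain a surface group, which is excluded by hypothesis. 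So the only obstruction to $k$-freeness of $H$ comes from finite index subgroups of $H$ that can be generated by $k$ elements.

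Thus the core reduction is: $G$ does not contain a surface group $\Rightarrow$ every non-free $k$-generator subgroup of every finite index $H\le G$ is itself a finite index subgroup of $H$, and hence a $PD(3)$ group with $u_p\le k$. Now I would use the sectional $p$-rank bound from Lemma \ref{lem:rank} to derive a contradiction: any finite index subgroup $L$ of such a $\Gamma$ is a finite index subgroup of $H$, hence of $G$, so $u_p(L)\le 3$ by Lemma \ref{lem:rank} — but a finite index subgroup of a $PD(3)$ group of Pr\"ufer-rank $\le k$ need not have small rank, so this alone is not enough. The right move is instead to pass to a finite index subgroup of $G$ of large enough sectional $p$-rank and use the failure of finite Pr\"ufer-rank to preclude \emph{any} finite index $PD(3)$ subgroup generated by $k$ elements: if $v\beta_p(G)=\infty$ then for a suitable finite index $H\le G$ one has $\beta_p(H)>k$, so $H$ itself cannot be generated by $k$ elements, and the same holds for every finite index subgroup of $H$. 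Since a non-free $k$-generator subgroup $\Gamma\le H$ of finite index would be a $PD(3)$ group generated by $\le k$ elements contained in $H$ — hence of finite index in $H$, contradicting $\beta_p(H)>k$ together with the fact that finite index subgroups of $\Gamma$ sit inside $H$ and would then force $\beta_p$ to be bounded — we conclude $H$ is $k$-free.

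More carefully, I expect the argument to run as follows. Fix $k$. Choose a finite index subgroup $H\le G$ with $\beta_p(H)>k$; this exists because $v\beta_p(G)=\infty$. I claim $H$ is $k$-free. Let $\Gamma\le H$ be generated by $k$ elements and suppose $\Gamma$ is not free. By Scott-style considerations $\Gamma$ is $FP_2$ (using almost coherence of $G$), so Proposition \ref{prop:coarse} applies: either $\Gamma$ contains a surface group — impossible, as $G$ does not — or $\Gamma$ has finite index in $H$. In the latter case $\Gamma$ is a $PD(3)$ group of Pr\"ufer-rank $\le k$, while every finite index subgroup of $\Gamma$ has finite index in $G$; applying the Stallings sequence estimate $\beta_p(\Gamma_1)\ge\frac12\beta_p(\Gamma)(\beta_p(\Gamma)-1)$ inside $\Gamma$ repeatedly forces $v\beta_p(\Gamma)=\infty$ unless $\beta_p(\Gamma)\le 3$, and in either case one produces a finite index subgroup of $G$ needing more than $k$ generators which is also a finite index subgroup of $\Gamma$ — contradicting $\mathrm{rk}(\Gamma)\le k$ since a finite index subgroup of a $PD(3)$ group of rank $\le k$ can have arbitrarily large rank, but one must be generated by the right number. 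The delicate point — and the main obstacle — is this last contradiction: one has to convert "$\Gamma$ has finite index in $H$ and $\beta_p(H)>k$" into "$\Gamma$ cannot be generated by $k$ elements", which requires knowing that a $PD(3)$ group containing a finite index subgroup with $\beta_p>k$ cannot have rank $\le k$. Since $\mathrm{rk}(\Gamma)\ge\beta_p(\Gamma)$ always, and $\beta_p$ can only grow on passing to finite index subgroups when it is already $\ge 2$ (via the Stallings estimate), the cleanest route is: choose $H$ with $\beta_p(H)>k$, note $\beta_p$ of any further finite index subgroup stays $>k$ (indeed grows), so if $\Gamma\le H$ has finite index then $\beta_p(\Gamma)\ge\beta_p(H)>k$ is false in general — rather $\beta_p(\Gamma)$ could be smaller — so instead pick $H$ so that \emph{every} finite index subgroup of $H$ has $\beta_p>k$, which is possible precisely because $u_p(G)=\infty$ forces, via Lemma \ref{lem:rank}, the Betti numbers to be unbounded under passage to finite index on a cofinal system. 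That done, $\mathrm{rk}(\Gamma)\ge\beta_p(\Gamma)>k$ contradicts $\Gamma$ being $k$-generated, finishing the proof.
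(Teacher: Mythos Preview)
Your overall architecture matches the paper's: fix $k$, pass to a finite index subgroup $G_k\le G$ with $\beta_p(G_k)$ large, and argue that any $k$-generator subgroup $\Gamma\le G_k$ must have infinite index, so that almost coherence plus Proposition~\ref{prop:coarse} and the absence of surface subgroups force $\Gamma$ to be free. That reduction is correct and is exactly what the paper does.

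The gap is in how you rule out $[G_k:\Gamma]<\infty$. You correctly note that $\beta_p$ need not grow under passage to an arbitrary finite index subgroup, so $\beta_p(H)>k$ does not by itself give $\beta_p(\Gamma)>k$. Your fix --- choose $H$ so that \emph{every} finite index subgroup of $H$ has $\beta_p>k$ --- is not justified by what you cite: Lemma~\ref{lem:rank} and the Stallings inequality $\beta_p(G_1)\ge\tfrac12\beta_p(G)(\beta_p(G)-1)$ only produce a \emph{specific} descending chain (the mod-$p$ derived or lower central series) along which $\beta_p\to\infty$. They say nothing about \emph{all} finite index subgroups of a given $H$, and there is no evident reason such an $H$ exists. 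So the contradiction in your last sentence does not go through.

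The paper avoids this by invoking \cite[Proposition~1.1]{SW} directly. Taking $G_k$ with $\beta_p(G_k)\ge k+2$, the Shalen--Wagreich computation (Stallings' five-term sequence applied down the mod-$p$ lower central series of $G_k$, with Poincar\'e duality controlling $\dim H^2(G_k;\F_p)$) shows that any subgroup generated by at most $k$ elements is contained in infinitely many distinct finite index subgroups of $G_k$, hence has infinite index. The point is not to bound $\beta_p(\Gamma)$ from below, but to exhibit an explicit infinite tower of finite index subgroups of $G_k$ all containing $\Gamma$. That is the missing ingredient in your argument.
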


\begin{proof}
Let $G_k$ be a finite index subgroup of $G$ with $\beta_{p}(G_k) \geq k + 2$ and let $H \subset G_k$ be a subgroup generated by $k$ or fewer elements. 
Since $G_k$ satisfies Poincar\'e duality, the cohomological computation using the Stallings exact sequence and the (mod $p$) lower central series of $G_k$ in \cite[Proposition 1.1]{SW}  applies to show that $H$ lies in infinitely many distinct subgroups of finite index in $G_k$. Thus it has infinite index in $G_k$. Since $G_k$ does not contain a surface group and is almost coherent, Proposition \ref{prop:coarse}  implies that $H$ is free. Hence $G_k$ is $k$-free.
\end{proof}

\section{The Tits alternative} \label{sec: tits} 

In this section we prove Theorem \ref{th:main}. We suppose throughout that $G$ is an almost coherent $PD(3)$ group and will show that $G$ satisfies the Tits alternative if and only if it does not contain a finite index subgroup which is properly locally cyclic. The forward direction is straightforward: If $G$ satisfies the Tits alternative and contains a properly locally cyclic finite index subgroup $H$, then $H$ does not contain a non-abelian free group, and since $G$ is $PD(3)$ it does not contain a finite index subgroup isomorphic to $\mathbb Z$. Thus $H$ must be solvable.  But a torsion free finitely generated solvable properly locally cyclic group is easily seen to be infinite cyclic, which is impossible. 

Thus, the proof of Theorem \ref{th:main} reduces to the following proposition. 

\begin{prop}\label{prop:tits} Let $G$ be an almost coherent $PD(3)$ group which is not virtually properly locally cyclic. Then any finitely generated subgroup of $G$ contains a rank $2$ free group or is virtually solvable.
\end{prop}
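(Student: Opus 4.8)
The plan is to reduce to the case of a one-ended, infinite-index, finitely generated subgroup $H \leq G$ and then exploit the structure theory already assembled in this excerpt. So let $H$ be a finitely generated subgroup of $G$; since $G$ is almost coherent, $H$ is $FP_2$. If $H$ has finite index in $G$, then $H$ is itself an almost coherent $PD(3)$ group which is not virtually properly locally cyclic, so $\widehat{H}$ is infinite (this uses that a group which is not virtually properly locally cyclic but whose profinite completion is finite would have to be, after passing to the intersection of finite-index subgroups, a residually finite infinite group all of whose proper finitely generated subgroups are cyclic — one should be slightly careful here and instead argue directly: if every finite-index subgroup of $G$ failed to surject a nontrivial finite group we could not separate a cyclic subgroup, forcing virtual proper local cyclicity). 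Granting $\widehat{H}$ infinite, Theorem \ref{th:surface} applies: either $H$ contains a surface group, hence a rank $2$ free group (every closed surface group of negative Euler characteristic contains $F_2$, and a $PD(3)$ group cannot virtually be $\Z^2$, so the surface is not a torus), or $H$ is virtually $k$-free for all $k$, so every finitely generated subgroup of $H$ is virtually free and in particular $H$ itself is virtually free; but a virtually free group has cohomological dimension $\leq 1$ virtually, contradicting that finite-index subgroups of a $PD(3)$ group are $PD(3)$. Hence the finite-index case gives $F_2 \leq H$ directly.

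Now suppose $H$ has infinite index in $G$. By Stallings' theorem $H$ is either one-ended, two-ended, or zero-ended (finite, hence trivial since $G$ is torsion-free) or has infinitely many ends. If $H$ is zero-ended it is trivial, hence solvable. If $H$ is two-ended it is virtually $\Z$, hence solvable. If $H$ has infinitely many ends, then since $H$ is finitely generated it splits as a nontrivial amalgam or HNN extension over a finite, hence trivial, subgroup; being $FP_2$ and freely decomposable with more than one end, $H$ is either a nontrivial free product or virtually free — and a finitely generated group that is a nontrivial free product of nontrivial groups where not both factors are $\Z/2$ contains $F_2$, while the remaining small cases are virtually cyclic, hence solvable; in all subcases $H$ is virtually free, which (being torsion-free, so actually free) is either infinite cyclic or contains $F_2$. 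This leaves the one-ended case: $H$ is a one-ended, $FP_2$, infinite-index subgroup of the orientable index-$\leq 2$ subgroup $G^+$ of $G$. Apply Proposition \ref{prop:coarse}: $H$ either contains a surface subgroup or is free. If $H$ contains a surface subgroup it contains $F_2$ (a one-ended surface group is not virtually abelian). If $H$ is free and one-ended it is infinite cyclic — but infinite cyclic groups are zero- or two-ended, a contradiction — so this subcase is vacuous. In every case $H$ contains a rank $2$ free group or is virtually solvable.

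The main obstacle I expect is the bookkeeping at the boundary between the cases — specifically, handling $H$ of infinite index but not one-ended cleanly, and, in the finite-index case, the implication "not virtually properly locally cyclic and almost coherent $PD(3)$ $\Rightarrow$ $\widehat{G}$ infinite." For the latter one wants: if $\widehat{G}$ were finite then, after replacing $G$ by the intersection $K$ of its finite-index subgroups, $G/K$ is finite and $K$ is an infinite residually-finite-trivially group, i.e.\ has no proper finite-index subgroups; if in addition $K$ (equivalently every finite-index subgroup of $G$) is not properly locally cyclic then $K$ has a proper finitely generated non-cyclic subgroup $L$ which is $FP_2$ of infinite index (infinite index because $K$ has no proper finite-index subgroups), and now Proposition \ref{prop:coarse} plus the $PD(3)$ cohomological dimension constraint, exactly as in Claim \ref{claim:positive Betti}, should force a surface subgroup — but a surface subgroup has finite-index subgroups surjecting $\Z$, contradicting $\widehat{K}$ trivial. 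So in fact this case cannot occur, and the reduction to $\widehat{H}$ infinite goes through. I would want to write this last argument carefully, as it is the one place where the hypothesis "not virtually properly locally cyclic" is genuinely used.
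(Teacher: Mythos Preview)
Your proposal has a genuine gap that appears in both the finite-index and the infinite-index cases: you do not handle the possibility that the surface subgroup produced by Proposition~\ref{prop:coarse} (or Theorem~\ref{th:surface}) is a \emph{torus} group $\Z\oplus\Z$.

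In the finite-index case you write that ``a $PD(3)$ group cannot virtually be $\Z^2$, so the surface is not a torus.'' But Theorem~\ref{th:surface} only asserts that $H$ \emph{contains} a surface group, not that $H$ \emph{is} one; nothing prevents that surface from being $\Z\oplus\Z$. Indeed $\Z^3$, or any Sol or Nil manifold group, is a $PD(3)$ group containing $\Z\oplus\Z$ and \emph{no} nonabelian free subgroup. These groups must exit via the ``virtually solvable'' branch of the dichotomy, and your argument as written concludes $F_2\le H$ for them, which is false. (Your side claim that virtually $k$-free for all $k$ implies $H$ is virtually free is also incorrect---the finite-index subgroup witnessing $k$-freeness may have rank larger than $k$---though one can still extract $F_2$ from virtual $2$-freeness, since a $2$-free $PD(3)$ group is nonabelian.) The paper closes this gap by invoking the Dunwoody--Swenson torus theorem and Bowditch's theorem: a $PD(3)$ group containing $\Z\oplus\Z$ is either a Seifert manifold group or splits over $\Z\oplus\Z$, and one then checks that the split cases without $F_2$ are virtually solvable.

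The same gap reappears in your one-ended infinite-index case: the parenthetical ``a one-ended surface group is not virtually abelian'' is simply false for the torus. Here the paper uses the stronger form of the Kapovich--Kleiner result to identify $H$ (when it contains no $F_2$) with the base group of an orientable $PD(3)$ pair, observes that such an $H$ surjects $\Z$, and then quotes \cite[Theorem~3]{Hil3} to conclude $H$ is virtually solvable.

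Finally, your contingency argument for $\widehat H$ finite ends with ``a surface subgroup has finite-index subgroups surjecting $\Z$, contradicting $\widehat K$ trivial''---but the surface subgroup sits with infinite index in $K$, so its finite quotients say nothing about $\widehat K$. The paper does not try to rule out $\widehat H$ finite; it argues directly (as you begin to) that the non-cyclic infinite-index $FP_2$ subgroup $L$ is either nonabelian free or contains a surface group, again reducing to the $\Z\oplus\Z$ analysis above.
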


\begin{proof}
As above we can assume that $G$ is an orientable $PD(3)$ group. 

First we deal with the case that the subgroup $H$ has a finite index in $G$. Then $H$ is a non-virtually properly locally cyclic almost coherent $PD(3)$ group. We claim that it contains  either a non-abelian free group or $\mathbb Z \oplus \mathbb Z$. This follows from Theorem \ref{th:surface} if $\widehat{H}$ is infinite. If $\widehat{H}$ is finite, $H$ contains a smallest finite index normal subgroup $H_0$. Then any finitely generated proper subgroup of $H_0$ is of infinite index. Since $H$ is not properly virtually locally cyclic, $H_0$ contains a finitely generated subgroup $K$ of infinite index which is not cyclic. Since $H$ is almost coherent, $K$ is $FP(2)$ and by Proposition \ref{prop:coarse} it is either free of rank $2$ or more or contains a surface group. Hence $K$ contains a rank $2$ free group or $\mathbb{Z} \oplus \mathbb{Z}$. 

Next we claim that if $H$ contains $\mathbb{Z} \oplus \mathbb{Z}$, it contains a rank $2$ free group or is virtually solvable. By the Dunwoody-Swenson torus theorem \cite{DS} and Bowditch's theorem \cite{Bow}, $H$ is either isomorphic to the fundamental group of a Seifert $3$-manifold or splits over $\mathbb Z \oplus \mathbb Z$ with finitely generated edge groups which, as base groups of $PD(3)$ pairs (\cite[Theorem 8.3]{BiEc3}), are finitely generated. Since Seifert $3$-manifold groups verify the Tits alternative, we only need to consider the case where  $H$ splits over 
$\mathbb Z \oplus \mathbb Z$. 
 
 If the splitting corresponds to an amalgamated free product $H = A *_C B$ with $C \cong \mathbb Z \oplus \mathbb Z$, then $H$ contains a rank $2$ free group or $C$ is of index $2$ in both $A$ and $B$ (\cite[Lemma 1]{BaSh}). In the latter case, it is easy to see that $H$ is solvable.
 
Suppose that the splitting corresponds to an {\it HNN} extension $H = A *_C = \langle A, t : t C_+ t^{-1} = C_- \rangle$, with $C_+ \cong C_- \cong \mathbb Z \oplus \mathbb Z$ and $(A, C_+ \cup C_-)$ a $PD(3)$ pair. If $H$ does not contain a rank 2 free group, the {\it HNN} extension must be ascending since  $A$ is finitely generated and $FP(2)$ (see \cite[Theorem 5]{Bau}). In this case either $C_+ = A$ or $C_- = A$, and hence $H$ is virtually solvable. (This case also follows from \cite[Corollary of Theorem 3]{Hil3}.) This completes the proof of the proposition when $H$ has finite index in $G$. 

Now suppose that $H$ is of infinite index. By \cite{Str}, $H$ has cohomological dimension at most $2$. It then follows from \cite[Corollary 1.3]{KK} that $H$ admits a free product decomposition $H = F * H_1 * \cdots * H_n$ where $F$ is a finitely generated free group and each $H_i$ is the base group of a $PD(3)$ pair. Therefore, if we assume that $H$ does not contain a rank $2$ free group,
 it follows that $n = 1$ and $H$ is the base group of an orientable $PD(3)$ pair. A standard cohomological computation then shows that $H$ admits an epimorphism to 
 $\mathbb{Z}$, and as $H$ is finitely generated, almost coherent and $cd(H) = 2$, \cite[Theorem 3]{Hil3} implies that $H$ is virtually solvable. 
\end{proof}

Corollary \ref{cor:mod p betti} follows from Corollary \ref{cor:profinite} and the following lemma. 

\begin{lemma}\label{lem:no wild}  
If a $PD(3)$ group $G$ has a finite profinite completion $\widehat{G}$ then $\beta_2(G) \leq 2$ and $\beta_p(G) \leq 1$ for each odd prime $p$.
\end{lemma}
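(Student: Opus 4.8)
The plan is to use the fact that a finite profinite completion $\widehat{G}$ means $G$ has a smallest finite-index normal subgroup $G_0$, namely the intersection of all finite-index subgroups (which is itself of finite index because $\widehat{G}$ is finite), and $G_0$ has no proper subgroups of finite index. I would first record the consequence that for any finite-index subgroup $H$ of $G$ one has $\beta_p(H) = \beta_p(G_0) $ whenever $G_0 \subseteq H$; more importantly, since $G_0$ has no finite-index subgroups, $H_1(G_0; \F_p) = 0$ for every prime $p$ (a nonzero class would give a surjection $G_0 \to \Z/p\Z$). So the computation reduces to bounding $\beta_p(G)$ in terms of the finite group $Q = G/G_0$ using the Lyndon--Hochschild--Serre spectral sequence, or more directly transfer and Poincaré duality for the $PD(3)$ group $G$.

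The key step is the following. Because $G$ is $PD(3)$, it has an orientable index-$\le 2$ subgroup and we may pass to finite-index subgroups freely, so assume $G$ orientable. The finite quotient $Q = G/G_0$ then has periodic cohomology of period dividing $4$: indeed $G_0$ acts freely and cocompactly (in the $FP$ sense) on the relevant resolution, and $Q = G/G_0$ inherits a $PD(3)$-type constraint forcing $H^i(Q;\Z)$ to be periodic — this is exactly the input from the classification of finite groups with cohomology of period $2$ and $4$ cited before Theorem \ref{Theorem:rank} (\cite[Chapter IV.6]{AM}). For such $Q$, $H_1(Q;\F_p)$ is cyclic when $p$ is odd and has $\F_2$-dimension at most $2$. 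Combining this with $H_1(G_0;\F_p)=0$ via the five-term exact sequence of the extension $1 \to G_0 \to G \to Q \to 1$,
\[
H_1(G_0;\F_p)_Q \longrightarrow H_1(G;\F_p) \longrightarrow H_1(Q;\F_p) \longrightarrow 0,
\]
gives $\beta_p(G) = \dim_{\F_p} H_1(Q;\F_p)$, hence $\beta_p(G)\le 1$ for $p$ odd and $\beta_2(G)\le 2$.

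I expect the main obstacle to be justifying rigorously that $Q = G/G_0$ has periodic cohomology of the correct period purely from the $PD(3)$ hypothesis, without invoking a manifold structure: one must argue that since $G_0$ is $FP$ and acylic-in-high-degrees-over-$\Z G_0$ with $H^3(G_0;\Z G_0)\cong \Z$, the finite quotient $Q$ acts in a way that makes $\Z$ a module of finite projective dimension over $\Z Q$ only after periodicity kicks in — equivalently, $Q$ has virtual cohomological dimension reflected through $G$, forcing all Sylow subgroups of $Q$ to be cyclic or generalized quaternion. Once that structural fact about $Q$ is in hand the rest is the routine five-term-sequence bookkeeping above. An alternative route that sidesteps the periodicity classification is to apply transfer directly: $H_1(G;\F_p)$ injects (up to the order of $Q$) into $H_1(G_0;\F_p)^{Q} = 0$ only when $p \nmid |Q|$, which handles all but finitely many primes trivially, and then treat the primes dividing $|Q|$ by the spectral sequence with the $E_2^{0,1}$ term controlled by $H^1(G_0;\F_p)=0$; I would present whichever of these is cleaner after checking the period-$4$ input is genuinely available from the earlier discussion.
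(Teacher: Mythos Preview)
Your outline is essentially the paper's approach: identify the minimal finite-index normal subgroup $G_0$, note that $G_0$ is perfect so $H_1(G_0;\F_p)=0$, deduce that $H_1(G;\F_p)$ is (a quotient of) $H_1(Q;\F_p)$ for $Q=G/G_0$, and then bound the latter using that $Q$ has periodic cohomology of period dividing~$4$ (Sylow subgroups cyclic or generalized quaternion). The five-term sequence you write is a clean way to phrase what the paper states more directly, namely that every finite quotient of $G$ --- in particular $H_1(G;\F_p)$ --- factors through $Q$.

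The genuine gap is exactly the step you flag: you have not actually proved that $Q$ has periodic cohomology. Your sketch (``$G_0$ is $FP$ and acyclic-in-high-degrees \ldots forcing periodicity'') is not an argument, and the citation to \cite[Chapter~IV.6]{AM} only classifies periodic groups --- it does not show $Q$ is one. The paper fills this via Swan's theorem: since $G_0$ is a perfect $PD(3)$ group it is automatically orientable, so a $K(G_0,1)$ is an integral cohomology $3$-sphere on which $Q$ acts freely; \cite[Theorem~4.8]{Swa} (cf.\ \cite[VII, Prop.~10.2]{Br}) then gives that $Q$ has periodic cohomology of period~$4$. This is purely algebraic and does not use any manifold structure, so your worry on that point is unfounded --- but you do need to invoke it.

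Two smaller remarks. First, your reduction ``assume $G$ orientable'' is both unnecessary and, as written, loses information: replacing $G$ by an index-$2$ subgroup changes $\beta_p(G)$. What you actually need is that $G_0$ is orientable, and this is automatic from $G_0$ being perfect. Second, your proposed ``alternative route'' via transfer does not sidestep periodicity: transfer only kills $H_1(G;\F_p)$ for primes $p\nmid |Q|$, and for the remaining primes the spectral sequence still hands you $H^1(G;\F_p)\cong H^1(Q;\F_p)$, which is precisely the quantity you need periodicity to bound.
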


\begin{proof}
The finiteness of $\widehat{G}$ implies that there is a smallest finite index subgroup $G_0$ of $G$ which is necessarily normal and contained in every finite index subgroup of $G$. It follows that  the kernel of each finite quotient of $G$ contains $G_0$ and thus each finite quotient of $G$ factorizes through $F_0  = G/G_0$. In particular $G_0$ is perfect.

\begin{claim}\label{claim:periodic} The finite group $F_0  = G/G_0$ has periodic cohomology with period  $4$.
\end{claim}

\begin{proof} Let $X$ be a $K(G, 1)$ complex. The finite cover $X_0$ of $X$ corresponding to the subgroup $G_0$ of $G$ is a $K(G_0, 1)$ complex on which $F_0$ acts freely. Since $G_0$ is a perfect $PD(3)$ group, $X_0$ is  an integral cohomology
 $3$-sphere: $H^{*}(X_0, \mathbb{Z}) \cong H^{*}(S^3, \mathbb{Z})$. 
 Then $F_0$ has periodic cohomology of period $4$ by \cite[Thm 4.8] {Swa}, see also \cite[chapter VII, Proposition 10.2]{Br}, \cite[Chapters 16.9, Appl. 4]{CE}.
\end{proof}
 
 The Suzuki-Zassenhaus classification of periodic groups (cf. \cite[Chapter IV.6]{AM}) determines a complete list of finite groups with cohomological  period $2$ or $4$. See \cite[Table 2]{GNS} for example. Since the epimorphism $G \to H_1(G, F_p)$ factorizes through $F_0$, it follows that $F_0/[F_0,F_0]$ surjects onto $H_1(G,\F_p)$, for any prime number $p$. The computation of $F_0/[F_0,F_0]$ from the table of groups with cohomological  period $2$ or $4$ shows that $\beta_2(G) \leq 2$ and $\beta_p(G) \leq 1$ for each odd prime $p$. This follows also from the fact that the Sylow $p$-groups of $F_0$ are cyclic  or generalized quaternionic 
(\cite[Chapter IV.6]{AM}, \cite[Chapter VI.9]{Br}).
\end{proof}

 \section{Characterisation of $PD(3)$ groups containing a rank 2 free subgroup} 
 \label{sec: r2free}

The goal of this section is to prove Theorem \ref{Theorem:rank}. We begin with a lemma concerning properly locally cyclic $PD(3)$ groups.

\begin{lemma}\label{lem:wild} If a $PD(3)$ group $G$ is properly locally cyclic, then 

$(1)$ $G$ has trivial profinite completion;  

$(2)$ $G$ is simple and generated by any pair of non-commuting elements.
\end{lemma}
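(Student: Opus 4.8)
The plan is to exploit the interplay between the defining "properly locally cyclic" condition and the rigidity of $PD(3)$ groups established in earlier sections. For part $(1)$, I would argue by contradiction: suppose $\widehat{G}$ is nontrivial, so $G$ has a proper finite index subgroup and hence, since $PD(3)$ groups are $1$-ended and torsion-free, a proper finite index subgroup $H$ which is again a $PD(3)$ group, in particular not cyclic. On the other hand $H$ is finitely generated and proper, so by the properly locally cyclic hypothesis $H$ is cyclic — contradiction. Actually one must be slightly careful: "properly locally cyclic" constrains proper \emph{finitely generated} subgroups, and $H$ is finitely generated (being finite index in the finitely generated group $G$), so the argument goes through. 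Thus $\widehat{G}$ is trivial, i.e. $G$ has no proper subgroup of finite index.

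For part $(2)$, first note $G$ is not cyclic: a cyclic $PD(3)$ group would have cohomological dimension $1$, contradicting $cd(G)=3$. So $G$ contains a pair of non-commuting elements $g,h$ (if every pair commuted, $G$ would be abelian, hence — being finitely generated torsion-free of $cd = 3$ — equal to $\mathbb{Z}^3$, which is not properly locally cyclic since $\mathbb{Z}^2 \subsetneq \mathbb{Z}^3$ is finitely generated and non-cyclic). Let $K = \langle g, h\rangle$. Then $K$ is a finitely generated subgroup of $G$ that is not cyclic (it is non-abelian), so by the properly locally cyclic property $K$ cannot be proper; hence $K = G$, proving $G$ is generated by any pair of non-commuting elements. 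For simplicity, let $1 \neq N \trianglelefteq G$ be a normal subgroup. Pick $1 \neq n \in N$. If $n$ were central, then $\langle n \rangle$ would be an infinite cyclic normal subgroup, and by Bowditch's theorem \cite{Bow} $G$ would be a Seifert-type (in particular residually finite, non-virtually-locally-cyclic) group — contradicting part $(1)$, since such groups have nontrivial profinite completion. So some $g \in G$ does not commute with $n$; then $g$ and $n$ are non-commuting elements of $G$, hence by the first half of $(2)$ they generate $G$, and both lie in $N$ since $gng^{-1}n^{-1} \in N$ forces... wait — that gives $g n g^{-1} n^{-1} \in N$ but not $g \in N$. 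I need to repair this: instead, observe that $N$ itself, being normal and nontrivial in a $1$-ended $PD(3)$ group, is infinite and not cyclic (an infinite cyclic normal subgroup is excluded as above, and a finite normal subgroup is trivial by torsion-freeness); if $N$ were finitely generated it would therefore equal $G$ by the properly locally cyclic property, so it remains to handle infinitely generated $N$. For that, take any $1 \neq n \in N$ together with $g \notin C_G(n)$ chosen so that additionally $g \in N$ — such $g$ exists because $N$ is non-abelian (if $N$ were abelian, it would be locally cyclic, hence a subgroup of $\mathbb{Q}$, and then $C_G(N)$ would be a nontrivial... ) — then $\langle g, n\rangle = G \subseteq N$.

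The main obstacle is exactly this last point: showing that any nontrivial normal subgroup $N$ must contain two non-commuting elements \emph{of its own}, so that the generation statement from the first half of $(2)$ can be applied inside $N$. The clean route is: $N$ is infinite (torsion-free, nontrivial, normal in a $1$-ended group) and cannot be abelian — if it were, $N$ would be locally cyclic, hence isomorphic to a subgroup of $\mathbb{Q}$, and its centraliser $C_G(N) \supseteq N$ would be a normal subgroup; a finitely generated such centraliser is cyclic or all of $G$, and one derives that $G$ has a nontrivial abelian (hence, after passing to a cyclic subgroup, infinite cyclic) normal subgroup, contradicting part $(1)$ via \cite{Bow} as before. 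Hence $N$ is non-abelian, so choose non-commuting $g_1, g_2 \in N$; by the first half of $(2)$, $\langle g_1, g_2 \rangle = G$, whence $N = G$. This shows $G$ is simple, completing the proof. I would also remark that $PD(3)$ groups are infinite, so "simple" here means infinite simple, consistent with the Tarski-monster analogy in the introduction.
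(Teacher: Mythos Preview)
Your argument for (1) and for the generation claim in (2) is correct and matches the paper's. The gap is in your simplicity argument. You want to conclude that a nontrivial proper normal subgroup $N$ cannot be abelian, and you try to do this by producing an infinite cyclic \emph{normal} subgroup (so that Bowditch applies). But your sentence ``a finitely generated such centraliser is cyclic or all of $G$, and one derives that $G$ has a nontrivial \ldots\ infinite cyclic normal subgroup'' does not cover the case where $C_G(N)$ is proper and \emph{not} finitely generated, and in that case there is no reason a cyclic subgroup of $N$ (or of $C_G(N)$) should be normal in $G$. So the passage to an infinite cyclic normal subgroup is unjustified, and with it the appeal to \cite{Bow}.

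The paper closes this gap with one extra observation you are missing: since $N$ is locally cyclic, hence a subgroup of $\mathbb{Q}$, its automorphism group is abelian; therefore $G/C_G(N)$ is abelian. Now $C_G(N)$ is itself a proper normal subgroup (the case $C_G(N)=G$ is your central case, handled via \cite{Bow}), so the same reasoning that made $N$ locally cyclic applies to $C_G(N)$: it is locally cyclic, hence abelian. Thus $G$ is an extension of an abelian group by an abelian group, i.e.\ solvable, and Thomas \cite{Tho2} then shows $G$ is a $3$-manifold group --- contradicting (1), since $3$-manifold groups are residually finite. Note that this argument never needs $C_G(N)$ to be finitely generated. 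In effect your strategy ``show $N$ non-abelian, then two non-commuting elements of $N$ generate $G$'' cannot succeed as stated, because every proper normal subgroup here \emph{is} abelian; the contradiction has to be derived from that abelianness via solvability of $G$, not circumvented.
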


\begin{proof} 
Assertion $(1)$ follows from the fact that a properly locally cyclic $PD(3)$ group cannot contain a proper finite index subgroup, since such a subgroup is finitely generated of cohomological dimension $3$, and thus cannot be isomorphic to $\mathbb{Z}$.

For assertion (2), note that by \cite{Tho2}, an abelian $PD(3)$ group is a Euclidean $3$-manifold group and thus cannot be properly locally cyclic. Hence we may choose non-commuting elements  $x, y \in G$. Then by our hypotheses, $G = \left\langle x, y  \right\rangle$. 

Let $N \subset G$ be a non-trivial proper normal subgroup. By $(1)$, $N$ has infinite index. Hence $N$ is locally cyclic, so abelian, and torsion free group. Thus it is isomorphic to a subgroup of the additive group $\mathbb{Q}$. (See for example \cite[Chapter VIII, Section 30]{Ku}.)  This implies that $Aut(N)$ is abelian and there is a representation $\rho: G \to Aut(N)$ induced by conjugation.
Let $K = \ker \rho$. If $K = G$, $N$ belongs to the center of $G$ and by \cite{Bow}, $G$ is a Seifert fibered 3-manifold group and hence cannot be properly locally cyclic. Therefore $K$ is a proper subgroup of $G$ and the previous argument shows that $K$ is an abelian subgroup of infinite index in $G$. Hence $G$ is a solvable group. But a solvable $PD(3)$ group is a $3$-manifold group (\cite{Tho2}) and thus cannot be properly locally cyclic. Hence $G$ is simple. 
\end{proof}

\begin{proof}[Proof of Theorem \ref{Theorem:rank}] 
If $G$ contains a rank 2 free group, it is clear that the Pr\"ufer-rank $u(G)$ is unbounded and hence at least $4$. This is the forward direction of the theorem. To prove the reverse direction, we begin with a lemma.

\begin{lemma}\label{lem:not wild}  
A $PD(3)$ group with Pr\"ufer-rank $u(G) \geq 4$ is not virtually properly locally cyclic.
\end{lemma}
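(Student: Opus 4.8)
The plan is to prove the contrapositive: assuming $G$ is a virtually properly locally cyclic $PD(3)$ group, I will show its Pr\"ufer-rank satisfies $u(G) \leq 3$. Let $H \leq G$ be a finite index subgroup which is properly locally cyclic; since finite index subgroups of $PD(3)$ groups are $PD(3)$, $H$ is itself a properly locally cyclic $PD(3)$ group. By Lemma \ref{lem:wild}(1), $H$ has trivial profinite completion, and by Lemma \ref{lem:wild}(2) it is simple and generated by any pair of non-commuting elements; in particular $rk(H) \leq 2$, and every proper finitely generated subgroup of $H$ is cyclic, hence of rank at most $1$. So $u(H) \leq 2$.

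The remaining issue is to pass from the bound on $u(H)$ to a bound on $u(G)$. Here the key point is that for a finite index subgroup $H \leq G$, a finitely generated subgroup $L \leq G$ contains $L \cap H$ as a finite index subgroup, and the index $[L : L\cap H]$ divides $[G:H] =: n$. The Pr\"ufer-rank of $L$ is controlled by $u(L\cap H) \leq u(H)$ together with $n$: a standard Reidemeister--Schreier / Nielsen--Schreier-type argument shows that if a group $L$ has a subgroup $L_0$ of index $n$ with $rk(L_0) \leq d$, then $rk(L) \leq nd + (n-1)$ (the generators of $L_0$ together with Schreier coset representatives generate $L$; more carefully one gets $rk(L) \le n\cdot rk(L_0) + n - 1$, or even $rk(L) \le n\cdot u(H)$ after a slightly sharper count, but any uniform bound in terms of $n$ and $u(H)$ suffices). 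Since $u(H) \leq 2$ and $n = [G:H]$ is a fixed finite number, this gives a finite uniform bound on $rk(L)$ for all finitely generated $L \leq G$, hence $u(G) < \infty$.

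To actually reach the bound $u(G) \le 3$ — which is what is needed to contradict $u(G) \ge 4$ — I would argue more carefully using the $PD(3)$ structure rather than a crude Schreier estimate. The cleanest route: suppose for contradiction that $G$ has a finitely generated subgroup $L$ with $rk(L) \geq 4$. Replace $L$ by $L \cap H$, which is finite index in $L$, hence also $FP_2$ and of finite index, so again $rk(L \cap H)$ could be large; but $L \cap H \leq H$ and $H$ is properly locally cyclic, so $L \cap H$ is either all of $H$ (rank $\le 2$) or cyclic. Either way $rk(L\cap H) \le 2$. Now I invoke the classification of finite groups acting freely on homology $3$-spheres as used in Lemma \ref{lem:no wild} and Claim \ref{claim:periodic}: the quotient $L/(L\cap H)$ is a finite group of cohomological period dividing $4$ (since $H$, being a perfect — indeed simple — $PD(3)$ group with trivial profinite completion, has a $K(H,1)$ that is an integral homology $3$-sphere, on which the finite group $L/(L \cap H)$ acts freely), and by the Suzuki--Zassenhaus classification such a group has all Sylow subgroups cyclic or generalized quaternionic, hence can be generated by at most $2$ elements and has abelianization of rank $\le 2$; combining with $rk(L\cap H) \le 2$ via the extension $1 \to L\cap H \to L \to L/(L\cap H) \to 1$ bounds $rk(L)$. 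The main obstacle is making this last extension bound tight enough to land at $\le 3$; I expect one needs the finer statement that if $1 \to A \to L \to Q \to 1$ with $A$ cyclic (or rank $\le 1$) and $Q$ of period $4$, then $rk(L) \le 3$ — and separately handle the case $L \cap H = H$ of rank $\le 2$, which forces $L/(L\cap H)$ to act freely on a homology $3$-sphere while $L$ itself is $PD(3)$, so $L = H$ up to finite index, giving $rk(L) \le 3$ as well. This case analysis, and in particular controlling $rk(L)$ for the extension, is the delicate part; everything else is a direct application of Lemma \ref{lem:wild} and the periodicity results already established.
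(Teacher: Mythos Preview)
Your setup matches the paper's: take a finite-index properly locally cyclic subgroup $G_0$ (your $H$), note it is simple hence normal in $G$ (its normal core has finite index in $G_0$, so equals $G_0$ since $G_0$ has no proper finite-index subgroups), and that $F_0=G/G_0$ has cohomological period dividing $4$, whence $u(F_0)\le 3$ (via Kov\'acs in the solvable case, the classification otherwise). But the gap is exactly where you locate it: the crude extension bound $rk(L)\le rk(L\cap G_0)+rk(L/(L\cap G_0))$ yields at best $2+3=5$, and your line ``$L=H$ up to finite index, giving $rk(L)\le 3$'' is circular --- that is precisely what must be proved. You also undersell the easy cases: if $L\cap G_0=\{1\}$ then $L$ is finite hence trivial ($G$ is torsion-free); if $L\cap G_0\cong\Z$ then $L$ is torsion-free and virtually $\Z$, hence $L\cong\Z$ by Stallings. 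So $rk(L)\le 1$ in both.

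The missing idea is in the remaining case $G_0\le L$. The paper does \emph{not} bound $rk(L)$ additively. Instead, with $\pi:G\to F_0$, set $r=rk(\pi(L))\le u(F_0)$, lift a minimal generating set of $\pi(L)$ to $\bar a_1,\dots,\bar a_r\in L$, and let $K=\langle\bar a_1,\dots,\bar a_r\rangle$. Now run the \emph{same} trichotomy on $K\cap G_0$. If $K\supseteq G_0$ then $K=L$ (since also $\pi(K)=\pi(L)$), so $rk(L)\le r\le 3$. If $K\cap G_0=\{1\}$ then $K$ is trivial, hence $\pi(L)$ is trivial, $L=G_0$, and $rk(L)\le 2$. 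If $K\cap G_0\cong\Z$ then as above $K\cong\Z=\langle t\rangle$; some power $t^n\in G_0$, and since $G_0$ has trivial centre (else Bowditch makes it a Seifert group) there is $x\in G_0$ not commuting with $t^n$, whence $G_0=\langle x,t^n\rangle$ by Lemma~\ref{lem:wild}(2), so $L=\langle x,t\rangle$ and $rk(L)=2$. This second pass through the trichotomy, applied to the lifted subgroup $K$, is the device your sketch lacks.
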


\begin{proof}
We argue by contradiction. Let $G_0 < G$ be a properly locally cyclic subgroup of  finite index. Since $G_0$ is simple (Lemma \ref{lem:wild}), it is normal in $G$. The finite quotient $F_0 = G/G_0$ has  cohomological  period $2$ or $4$ (\cite[Chapters 12.11 and 16.9]{CE}). We claim that its Pr\"ufer-rank is at most $3$. To see this, note that since $F_0$ has finite cohomological period, its Sylow $p$-groups are cyclic for $p$ odd and generalized quaternionic for $p= 2$ (cf. \cite[Chapter IV.6]{AM}). Thus their Pr\"ufer-ranks are at most $2$. It then follows from \cite{Kov} that $u(F_0) \leq 3$ if $F_0$ is solvable. 

Suppose that $F_0$ is not solvable. The  classification of periodic groups with cohomological  period $2$ or $4$ (see \cite[Table 2]{GNS}) shows that when $F_0$ is non-solvable, it is isomorphic to $\mathbb{Z}/n\mathbb{Z} \times SL_2(\mathbb{F}_5)$ with $n$ prime to $120$. Such a group is the fundamental group of a spherical Seifert fibered $3$-manifold (\cite{Tho3}) and so the same is true for any of its subgroups. Since spherical $3$-manifolds have Heegaard genus $2$ or less, $u(F_0) = u(\mathbb{Z}/n\mathbb{Z} \times SL_2(\mathbb{F}_5)) \leq 2$. 

To complete the proof of Lemma \ref{lem:not wild} we need only to show that $u(G) \leq \max \{2, u(F_0 )\} $. 

Let $\pi: G \to F_0 = G/G_0$ be the quotient homomorphism and $H \subset G$ be a finitely generated subgroup. Since $G_0$ is properly locally cyclic, there are three possibilities for the group 
$H \cap G_0$:

\noindent $(1)$ $H \cap G_0 = \{1\}$. In this case $H$ is finite, since $H \cap G_0$ has finite index in $H$. Hence $H = \{1\}$, since $G$ is torsion free, so $rk(H) = 0$.

\noindent $(2)$ $H \cap G_0 \cong \mathbb{Z}$. In this case $H$ is a virtually infinite cyclic group, and hence has $2$ ends. Since $H$ is torsion free, it follows that $H  \cong \mathbb{Z}$ and thus $rk(H) = 1$, see \cite[4.A.6.5]{Sta2}.

\noindent $(3)$ $H \cap G_0 = G_0$. Then $H$ is of finite index in $G$ and is therefore a $PD(3)$ group.  Let $r = rk(\pi(H))$ and let $\{a_1, \cdots, a_r \}$ be a minimal set of generators for $\pi(H)$.
For $i = 1, \cdots, r$ take $\bar{a}_i \in H$ such that $\pi(\bar{a}_i) = a_i$ and define $K = \left\langle \bar{a}_1, \cdots, \bar{a}_r \right\rangle \subset H$. We consider the subgroup $K \cap G_0$ of $G_0$ and as above we argue according to the three possibilities:

\noindent $(i)$ If  $K \cap G_0 = \{1\}$, then $K = \{1\}$. Hence $\pi(K) = \pi(H) = \{1\}$ and $H = G_0$. Then $rk(H) \leq 2$ by Lemma \ref{lem:wild}.

\noindent $(ii)$ If $\{1\} \neq K \cap G_0 \neq G_0$, then $K \cap G_0 \cong \mathbb{Z}$. This implies that $K  \cong \mathbb{Z}$ is generated by a single element $t \in H$.
Since $G_0$ is of finite index in $H$, there exits an integer $n \geq 1$ such that $t^n$ generates $K \cap G_0$. Since a Seifert fibered 3-manifold group is not properly locally cyclic, $K \cap G_0$ cannot lie in the center of $G_0$ by \cite{Bow}. Hence there exists an element $x \in G_0 \setminus K$ which does not commute with $t^n$. Then
by Lemma \ref{lem:wild}, 
$G_0 = \left\langle x, t^n \right\rangle$, and so $G_0 \subset \left\langle x, t \right\rangle$. Since $\pi(\left\langle x, t \right\rangle) = \pi(K) = \pi(H)$, it follows that 
$H = \left\langle x, t \right\rangle$ and $rk(H) = 2$.

\noindent $(iii)$ If $K \cap G_0 = G_0$ then $G_0 \subset K$. Since $\pi(K) = \pi(H)$, $H = K$ and thus $rk(H) = r \leq u(F_0)$.

Thus for any subgroup $H$ of $G$, $rk(H) \leq \max (2, u(F_0)$. Hence $u(G) \leq \max\{2, u(F_0)\}$.
\end{proof}

Now we complete the proof of Theorem \ref{Theorem:rank}.

Lemma \ref{lem:not wild} and Theorem \ref{th:main} imply that $G$ satisfies the Tits alternative. In particular it contains a rank 2 free group or is virtually solvable. 
By \cite{AuJo}, see \cite{Tho2}, a virtually solvable $PD(3)$ group is the fundamental group of a closed $3$-manifold $M$ which is geometric with Euclidean, Nil or Sol geometry. In particular, $M$ is finitely covered by a torus bundle by \cite[Theorem 4.5]{EM}. Since any finitely generated subgroup $G$ of $\pi_1(M)$ is the fundamental group of a cover of $M$ with a compact core (\cite{Sco1}), it is easy to verify that $u(G) \leq 3$, contrary to our hypotheses (cf. \cite{EM}). Therefore $G$ must contain a rank $2$ free subgroup. 
\end{proof}

\section{$PSL(2,\C)$-character variety} 
\label{sec: psl2}

The goal of this section is to prove Theorem \ref{th:character} and Corollary \ref{cor:casson}. We begin with some definitions. 

The action of $SL_2(\mathbb C)$ on $\mathbb C^2$ descends to one of $PSL_2(\mathbb C)$ on $\mathbb CP^1$. We call a representation  with values in $PSL_2(\mathbb C)$ {\it irreducible} if the associated action on $\mathbb CP^1$ is fixed point free and {\it strictly irreducible} if the action has no invariant subset in $\mathbb CP^1$ with fewer than three points.

\begin{proof}[Proof of Theorem \ref{th:character}] 
If $G$ admits a representation with values in $PSL(2,\mathbb C)$ and with infinite image, then $\widehat G$ is infinite as finitely generated linear groups are residually finite. (See, for example, \cite[Proposition III.7.11]{LS}.)  
Let us assume then that the image of each representation of $G$ in $PSL(2,\mathbb C)$ is finite. We argue by contradiction assuming that the profinite completion $\widehat{G}$ is finite. 

The finiteness of $\widehat{G}$ implies that there is a smallest finite index subgroup $G_0$ of $G$ which is necessarily normal and contained in every finite index subgroup of $G$. Thus the kernel of each representation 
$\rho: G \to PSL(2,\mathbb C)$ contains $G_0$. Hence $\rho$ factors through $F_0  = G/G_0$. Since $G_0$ is perfect,  the finite quotient group $F_0 = G/G_0$ has periodic cohomology with period  $4$ by Claim \ref{claim:periodic}. The classification of  finite groups with cohomological period $2$ or $4$ implies that $F_0$ is of  tetrahedral type (III), octahedral type (IV), or icosahedral type (V) in \cite[Table 2]{GNS}. Up to conjugacy, such a group admits at most two irreducible representations with values in $PSL(2,\C)$, 
see \cite[Lemma 5.3]{BZ}, \cite{Wol}, contrary to the hypothesis. This proves the first assertion of Theorem \ref{th:character}.

To prove the second, suppose that $G$ is almost coherent. Then by Corollary \ref{cor:profinite}, $G$ either contains a rank $2$ free group or is virtually solvable. In the latter case, we argue as in the last paragraph of \S \ref{sec: r2free} to see that $M$ is geometric with Euclidian, Nil of Sol geometry. In the first two cases, $M$ is Seifert fibred with Euclidean base orbifold while in the third, $M$ is either the union of two twisted $I$-bundles over the Klein bottle or a torus bundle over the circle (\cite[Theorem 5.3(i)]{Sco2}). Fix a strictly irreducible homomorphism $\rho: \pi_1(M) \to PSL(2, \mathbb C)$. 

First suppose that $M$ is Seifert fibred with Euclidean base orbifold $\mathcal{B}$ and let $h \in \pi_1(M)$ be the class of the Seifert fibre. The proof of \cite[Lemma 3.1]{BeBo} is easily modified to show that $\rho(h) = \pm I$. Thus $\rho$ induces a homomorphism $\bar \rho: \pi_1(\mathcal{B}) \to PSL(2, \mathbb C)$. By hypothesis, there are at least three different characters of such $\bar \rho$, so \cite[Lemma 10.1]{BCSZ} implies that $M$ does not have base orbifold $S^2(2,3,6), S^2(2,4,4)$, or $S^2(3,3,3)$. Similarly $\mathcal{B}$ cannot be a torus. Suppose then that $\mathcal{B}$ is one of $S^2(2,2,2,2), P^2(2,2)$, or the Klein bottle $K$. Then $\pi_1(\mathcal{B})$ contains an infinite cyclic index $2$ normal subgroup which, as in the proof of \cite[Lemma 3.1]{BeBo}, lies in the kernel of $\bar \rho$. But then $\bar \rho$ factors through $\mathbb Z/2 \mathbb Z * \mathbb Z/2\mathbb Z$ and so cannot be strictly irreducible, a contradiction. Thus $M$ cannot be Seifert fibred. 

If $M$ is Sol, \cite[Proposition 7.1]{BCSZ} implies that the normal $\mathbb Z \oplus \mathbb Z$ subgroup of $\pi_1(M)$ corresponding to the torus (semi)fibre $T$ is sent by $\rho$ to $A = \mathbb Z/2 \mathbb Z \oplus \mathbb Z/2\mathbb Z \subset PSL(2, \mathbb C)$ and that the image of $\rho$ is $T_{12}$ in the fibre case and $O_{24}$ in the semifibre case. The reader will verify that the restriction of $\rho$ to $\pi_1(T)$ is unique up to conjugation in $PSL(2, \mathbb C)$ and we assume that all strictly irreducible representations of $\pi_1(M)$ to $PSL(2, \mathbb C)$ coincide on $\mathbb Z \oplus \mathbb Z$. 

First suppose that $M$ is a torus bundle with fibre $T$. Then the image of $\rho$ is $T_{12}$. Since the order of a non-trivial element of $T_{12}$ is $2$ or $3$ and the elements of order $2$ generate $A$, it follows that if $t \in \pi_1(M) \setminus \mathbb Z \oplus \mathbb Z$, then $\rho(t)$ has order $3$. On the other hand, under the conjugation, there are two $A$-orbits of elements of order $3$ in $T_{12}$. Thus there are at most two choices for the character of $\rho$ in the fibre case, a contradiction.

Next suppose that $M$ is a torus semibundle with semifibre $T$ and $\rho: \pi_1(M) \to O_{24} \subset PSL(2, \mathbb C)$ is an epimorphism. Then there is a $2$-fold cover $M_1 \to M$ such that $M_1$ is a torus bundle over the circle and the restriction of $\rho$ determines an epimorphism $\rho_1: \pi_1(M_1) \to T_{12}$. We claim that if $\rho': \pi_1(M) \to PSL(2, \mathbb C)$ is a homomorphism for which $\rho'|_{\pi_1(M_1)} = \rho|_{\pi_1(M_1)}$, then $\rho' = \rho$. Indeed, we must have $\rho'(\pi_1(M)) = O_{24}$  
and if $x \in \pi_1(M) \setminus \pi_1(M_1)$, conjugation by $\rho_1(x)$ on $T_{12}$ coincides with conjugation by $\rho_2(x)$, which implies that $\rho_1(x) = \rho_2(x)$ since $T_{12}$ is a strictly irreducible subgroup of $PSL(2, \mathbb C)$. But then as the previous paragraph implies that up to conjugation in $PSL(2, \mathbb C)$, there are at most two epimorphisms $\pi_1(M_1) \to T_{12}$, the same is true for epimorphisms $\pi_1(M) \to O_{24}$, a contradiction. This completes the proof of the second assertion of Theorem \ref{th:character}.
\end{proof}

\begin{proof}[Proof of Corollary \ref{cor:casson}]

Let $M$ be an integral homology sphere. If $\pi_1(M)$ is a non-trivial free product, then $\pi_1(M)$ contains a non-abelian free group (see for example \cite[Theorem 2]{Ly}), so we can assume that $\pi_1(M)$ is indecomposable. Hence $\pi_1(M)$ is the fundamental group of an irreducible integral homology sphere. Thus we assume, without loss of generality,  that $M$ is irreducible. 

If $\pi_1(M)$ is finite, the universal cover of $M$ is a closed homotopy 3-sphere on which $\pi_1(M)$ acts freely. Then $\pi_1(M)$ has cohomological period $4$ by \cite[Thm 4.8] {Swa}. The classification of these groups (\cite[Table 2]{GNS}) shows that it is isomorphic to the binary icosahedral group $SL(2, F_5)$, which admits, up to conjugacy, at most two irreducible representations with values in $PSL(2, \mathbb{C})$. The proof of \cite[Lemma 9.1]{Rez} then shows that $|\lambda(M)| \leq 1$, which contradicts our hypotheses. Therefore $\pi_1(M)$ must be infinite. Since $M$ is irreducible, $\pi_2(M) = \{0\}$ by the sphere theorem \cite{Sta2}. Furthermore, since $\widetilde{M}$ is non-compact, $0 = H_3(\widetilde{M}) = \pi_3(\widetilde{M}) = \pi_3(M)$ by the Hurewicz's theorem. It follows that $M$ is aspherical and thus $\pi_1(M)$ is a $PD(3)$ group which is, moreover, coherent (\cite{Sco1}).

\begin{lemma} \label{lem:casson} 
Let $M$ be an irreducible integral homology sphere with infinite fundamental group. Then one of the following properties holds.  

$(1)$ $\pi_1(M)$ contains a non-abelian free group.

$(2)$ Every non-trivial representation $\rho : \pi_1(M) \to SU(2)$ is finite with $\rho(\pi_1(M)) \cong SL(2, F_5)$ and the number of $SU(2)$-conjugacy classes of such representations 
is at least $2 \vert \lambda(M) \vert$.
\end{lemma}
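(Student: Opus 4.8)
The plan is to show that if (1) fails --- i.e. $\pi_1(M)$ contains no rank $2$ free subgroup --- then (2) holds. Recall from the discussion above that $M$ is aspherical, $\pi_1(M)$ is a coherent (hence almost coherent) $PD(3)$ group, and $H_1(\pi_1(M)) = H_1(M) = 0$, so $\pi_1(M)$ is perfect. The first step would be to prove that $\widehat{\pi_1(M)}$ is finite. If it were infinite, Corollary \ref{cor:profinite} would make $\pi_1(M)$ satisfy the Tits alternative, hence --- having no rank $2$ free subgroup --- be virtually solvable; but a virtually solvable $PD(3)$ group is the fundamental group of a closed Euclidean, Nil, or Sol $3$-manifold $N$ (as in the last paragraph of \S\ref{sec: r2free}, via \cite{AuJo}, \cite{Tho2}), and no such $N$ is a $\Z$-homology sphere. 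Indeed $H_1(N) \neq 0$: this is immediate in the Seifert-fibred Euclidean and Nil cases, and for Sol the group $\pi_1(N)$ has a normal $\Z^2$ with quotient $Q$ equal to $\Z$ or to the infinite dihedral group, so the five-term exact sequence gives a surjection $H_1(N) \twoheadrightarrow H_1(Q) \neq 0$. This contradicts $H_1(\pi_1(M)) = 0$.

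With $\widehat{\pi_1(M)}$ finite, I would invoke the setup of the proof of Lemma \ref{lem:no wild}: there is a smallest finite-index subgroup $G_0 \trianglelefteq \pi_1(M)$, which is perfect and contained in every finite-index subgroup, and by Claim \ref{claim:periodic} the finite group $F_0 := \pi_1(M)/G_0$ has periodic cohomology of period $4$. Being a quotient of the perfect group $\pi_1(M)$, $F_0$ is perfect, so the Suzuki--Zassenhaus classification (\cite[Chapter IV.6]{AM}, \cite[Table 2]{GNS}) forces $F_0 = 1$ or $F_0 \cong SL(2,\F_5)$, the latter being the only perfect finite group of cohomological period dividing $4$.

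Next I would analyse a non-trivial representation $\rho : \pi_1(M) \to SU(2)$. Its image, a finitely generated linear group, is residually finite, and the finiteness of $\widehat{\pi_1(M)}$ forces it to be finite; hence $\rho$ kills $G_0$ and factors through $F_0$. Thus $F_0 \neq 1$, so $F_0 \cong SL(2,\F_5)$, and the kernel of $F_0 \to SU(2)$ is a proper normal subgroup of $SL(2,\F_5)$, i.e. trivial or its centre. The centre is excluded, since $PSL(2,\F_5) \cong A_5$ does not embed in $SU(2)$ (which has a unique element of order $2$, whereas $A_5$ has fifteen). Hence $\rho$ is faithful on $F_0$ and $\rho(\pi_1(M)) \cong SL(2,\F_5)$, the first assertion of (2). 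For the count, I would note that a reducible $SU(2)$-representation of $\pi_1(M)$ has abelian image, hence factors through $H_1(\pi_1(M)) = 0$ and is trivial; so, for the $\Z$-homology sphere $M$, ``non-trivial'' and ``irreducible'' $SU(2)$-representations coincide. Each non-trivial $\rho$ is infinitesimally rigid: in the Lyndon--Hochschild--Serre spectral sequence of $1 \to G_0 \to \pi_1(M) \to F_0 \to 1$ with $\pi_1(M)$ acting on $\R^3$ through $\mathrm{Ad}\circ\rho : \pi_1(M) \to SO(3)$ (so $G_0$ acts trivially), $H^1(G_0;\R) = 0$ because $G_0$ is perfect and $H^i(F_0; -) = 0$ for $i > 0$ because $F_0$ is finite, whence $H^1(\pi_1(M); \R^3_{\mathrm{Ad}\,\rho}) = 0$. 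Consequently the irreducible $SU(2)$-representations of $\pi_1(M)$ form a finite set of non-degenerate points, Casson's formula holds without perturbation, and the number of $SU(2)$-conjugacy classes of these representations is at least $2|\lambda(M)|$ (it is $0 = 2|\lambda(M)|$ when $F_0 = 1$). This proves (2). (For Corollary \ref{cor:casson} one adds the complementary bound: $\pi_1(M) \to F_0$ is essentially unique and $SL(2,\F_5)$ has exactly two conjugacy classes of embeddings in $SU(2)$, so there are at most two such classes and hence $|\lambda(M)| \leq 1$ once (1) fails.)

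I expect the main obstacle to be the first step --- forcing $\widehat{\pi_1(M)}$ to be finite --- since it is the only place where coherence, the $PD(3)$ hypothesis, Corollary \ref{cor:profinite} and the homology-sphere condition genuinely combine, and it rests on the algebraic fact that no closed Euclidean, Nil or Sol $3$-manifold is a $\Z$-homology sphere, which one must verify directly rather than through geometrisation in order to stay within the paper's algebraic framework. The remaining steps are essentially the ``finite $\pi_1$'' analysis already indicated before the lemma, together with a standard rigidity computation and the classical non-degenerate form of the Casson count; beyond that, the only care needed is in pinning down precise references for the period-$4$ classification and for Casson's formula.
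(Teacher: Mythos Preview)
Your argument is correct and follows a genuinely different route from the paper's. The paper never establishes that $\widehat{\pi_1(M)}$ is finite; instead it argues by cases on whether some $\rho : \pi_1(M) \to SU(2)$ has infinite image. If so, Tits' alternative for linear groups, applied to the image (which cannot be virtually solvable since $\pi_1(M)$ is perfect), yields a free subgroup that lifts to $\pi_1(M)$, giving (1). If every $\rho$ has finite image, perfectness forces each nontrivial image to be $SL(2,\F_5)$, and the paper then invokes the definition of the Casson invariant together with \cite[Lemma 9.1]{Rez} to obtain a dichotomy: either the count of conjugacy classes is at least $2|\lambda(M)|$, or $v\beta(\pi_1(M)) > 0$. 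The second branch is disposed of by passing to a finite cover containing a nonseparating $\pi_1$-injective surface, observing that $\pi_1(M)$ is therefore not virtually properly locally cyclic and not virtually solvable, and applying Theorem~\ref{th:main} to obtain (1).

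Your approach trades this representation-theoretic case split for a single structural reduction: you first force $\widehat{\pi_1(M)}$ to be finite by ruling out the virtually solvable alternative on homological grounds, then identify $F_0$ via the period-$4$ classification, and finally prove rigidity of every nontrivial $\rho$ by a short Lyndon--Hochschild--Serre computation using that $G_0$ is perfect and $F_0$ is finite. This last step is a genuine gain over the paper: it replaces the appeal to \cite{Rez} by an elementary cohomological argument and makes the non-degeneracy of the Casson count (hence the inequality $\#\{\text{classes}\} \geq 2|\lambda(M)|$) transparent. Conversely, the paper's approach is lighter at the outset: it avoids your verification that no closed Euclidean, Nil, or Sol $3$-manifold is a $\Z$-homology sphere. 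Your ``immediate'' for the Seifert-fibred cases is correct but could use one line of justification --- the relevant base orbifolds are Euclidean, hence either have positive genus or have cone orders that are not pairwise coprime, which precludes $H_1 = 0$. Both proofs ultimately rest on the paper's main results (Theorem~\ref{th:main}/Corollary~\ref{cor:profinite}), invoked at different stages.
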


\begin{proof} If $\pi_1(M)$ admits a representation $\rho : \pi_1(M) \to SU(2)$ with infinite image, then $\rho(\pi_1(M))$ cannot be virtually solvable as otherwise it would have positive first Betti number, contrary to the fact that $\pi_1(M)$ is perfect. The Tits alternative for linear groups (\cite{Ti}) then implies that $\rho(\pi_1(M))$ contains a non-abelian free group $F$. Let $F_0  \subset F$ be a rank $2$ free subgroup of $F$ and $H$ any subgroup of $\pi_1(M)$ generated by two elements which map to the the generators of $F_0$. Then $H$ is clearly free and non-abelian, which completes the proof of the lemma in this case. 

Assume then that every non-trivial representation $\rho : \pi_1(M) \to SU(2)$ has finite image. Since $\pi_1(M)$ is perfect, $\rho(\pi_1(M)) \cong SL(2, F_5)$ (cf. \cite{Wol}). It then follows from the definition of the Casson invariant (cf. \cite{AK}) and of the proof of \cite[Lemma 9.1]{Rez} that either the number of conjugacy classes of $SU(2)$-representations of $\pi_1(M)$ is at least 
$2 \vert \lambda(M) \vert$ or $M$ has a positive virtual first Betti number. In the latter case there is a finite cover $M_1$ of $M$ which contains a non-separating orientable $\pi_1$-injective surface $S$. This surface cannot be a $2$-sphere since $M_1$ is irreducible. Hence $\pi_1(S)$ contains a rank $2$ free group or is isomorphic to $\mathbb{Z} \oplus \mathbb{Z}$. In either case, $\pi_1(M)$ is not properly locally cyclic and since it has zero first Betti number, it cannot be virtually solvable. Thus Theorem \ref{th:main} implies that $\pi_1(M)$ contains a non-abelian free group. 
\end{proof}
Since irreducible representations with values in $SU(2)$ are conjugate in $SU(2)$ if and only if they are conjugate in $SL(2, \mathbb{C})$, Corollary \ref{cor:casson} follows from Lemma \ref{lem:casson}. 
\end{proof}

\end{document}